\newcommand{\dd}{\text{\rm d}}             
\newcommand{\ee}{\text{\rm e}}  
\newcommand{\cA}{\mathcal{A}}
\newcommand{\1}{{\bf{1}}}
\newcommand{\bigO}{\mathrm{O}}
\theoremstyle{plain}
\newtheorem{theorem}{Theorem}[section]
\newtheorem{lemma}[theorem]{Lemma}
\newtheorem{proposition}[theorem]{Proposition}
\theoremstyle{definition}
\newtheorem{remark}[theorem]{Remark}
\newtheorem{defi}[theorem]{Definition}
\renewenvironment{proof}[1][] {\noindent {\bf Proof#1.} }{\hspace*{\fill}$\square$\medskip\par}
\newcommand{\ve}{{\varepsilon}}
\newcommand{\N}{{\mathbf N}}
\newcommand{\R}{{\mathbf R}}
\newcommand{\Q}{{\mathbf Q}}
\newcommand{\E}{{\mathbf E}}
\newcommand{\F}{{\cal F}}
\newcommand{\A}{{\cal A}}
\newcommand{\NN}{{\cal N}}
\renewcommand{\P}{{\mathbf P}}
\newcommand{\tr}{{\mathrm {tr}}}
\author{Michael Scheutzow%
  \thanks{Institut f\"ur Mathematik, MA 7-5, Fakult\"at II, 
        Technische Universit\"at Berlin, 
        Stra\ss e des 17.~Juni 136, 10623 Berlin, FRG;  \ 
        \small{\tt ms{\scriptsize @}math.tu-berlin.de}}
\and Susanne Schulze%
  \thanks{Institut f\"ur Mathematik, MA 7-5, Fakult\"at II, 
        Technische Universit\"at Berlin, 
        Stra\ss e des 17.~Juni 136, 10623 Berlin, FRG;  \ 
        \small{\tt Susanne\underline{ }Richter85{\scriptsize @}web.de}}}
\title{Strong completeness and semi-flows for stochastic differential equations with monotone drift}
\date{\today}
\begin{document}  \maketitle

\begin{abstract}\noindent
It is well-known that a stochastic differential equation (sde) on a Euclidean space driven 
by a (possibly infinite-dimensional) Brownian motion with Lipschitz coefficients generates a stochastic flow of 
homeomorphisms. If the Lipschitz condition is replaced by an appropriate one-sided Lipschitz condition 
(sometimes called monotonicity condition) and the number of driving Brownian motions is finite, 
then existence and uniqueness of global solutions for each fixed 
initial condition is also well-known. In this paper we show that under a slightly stronger one-sided Lipschitz 
condition the solutions still generate a stochastic semiflow which is jointly continuous in all variables 
(but which is generally neither one-to-one nor onto). We also address the question of strong $\Delta$-completeness 
which means that there exists a modification of the solution which if restricted to any set $A \subset \R^d$ of 
dimension $\Delta$ is almost surely continuous with respect to the initial condition. 

  \par\medskip

  \noindent\footnotesize
  \emph{2010 Mathematics Subject Classification} 
  Primary\, 60H10 
  \ Secondary\, 37C10, 35B27
\end{abstract}

\noindent{\slshape\bfseries Keywords.} Stochastic flow; stochastic semi-flow; stochastic differential equation; monotonicity; strong completeness; 
strong $\Delta$-completeness.

\section{Introduction}\label{intro}
In this paper, we study properties of the stochastic differential equation (sde)
\begin{equation}
\label{sde}
\dd X_t=b(X_t)\,\dd t + M(\dd t,X_t),
\end{equation}
where $M$ is a continuous martingale field on $\R^d$. Under an appropriate Lipschitz condition, this sde has a unique solution
$X_t$, $t \ge s$ for each initial condition $X_s=x\in \R^d$ and, 
moreover, the sde generates a stochastic flow of homeomorphisms (\cite[Theorem 4.5.1]{K90}).  
The aim of this paper is to show that a weaker semiflow property still holds in case the Lipschitz condition is replaced by 
a (local) one-sided Lipschitz condition (also known as {\em monotonicity} condition) and a {\em coercivity} condition. Under these 
weaker conditions, we cannot expect to obtain a stochastic flow of homeomorphisms anymore: both the one-to-one property and 
the onto property may fail -- even in the deterministic case. The best we can hope for is a modification 
$\phi: \{0 \le s \le t <\infty\} \times \R^d \times \Omega \to \R^d$ of the solution which depends continuously on the 
temporal and the spatial variables and which enjoys the semiflow property.
We will provide sufficient conditions on $b$ and $M$ for this to hold. We believe that our approach provides a quick and transparant  
proof of quite general strong completeness results under rather weak conditions. In fact we do not even assume local Lipschitz continuity of the coefficients. 
We will compare our results with others at the end of this section.

We will denote the standard inner product on $\R^d$ by $\langle .,.\rangle$, the Euclidean norm on $\R^d$ by $|.|$, 
the induced norm on $\R^{d \times d}$ by $\|.\|$ (which is equal to the largest eigenvalue in case the 
matrix is positive semi-definite) and the (joint) quadratic variation of continuous 
semimartingales by $[.,.]$. We denote the trace of a matrix $A \in \R^{d \times d}$ by $\tr (A)$.
Throughout the paper, we will impose the following assumptions:
\begin{itemize}
\item[$\bullet$] $(\Omega,\F,(\F_t)_{t \ge 0},\P)$ is a filtered probability space satisfying the {\em usual conditions}.
\item[$\bullet$] $b:\R^d \to \R^d$ is continuous.
\item[$\bullet$] For each $x \in \R^d$, $t \mapsto M(t,x)$ is a continuous $\R^d$-valued martingale s.t. $M(0,x)=0$.
\item[$\bullet$] The matrix $a(x,y):=\frac{\dd}{\dd t}[M(.,x),M(.,y)]_t$ is non-random and independent of $t$. Further, 
the map $(x,y) \mapsto a(x,y)$ is continuous.
\item[$\bullet$] Define $\cA(x,y):=a(x,x)-a(x,y)-a(y,x)+a(y,y)$. 
For each $R>0$ there exists some $K_R \ge 0$ such that
$2 \langle b(x)-b(y),x-y \rangle + \tr (\cA(x,y)) \le K_R |x-y|^2$ for all $|x|,|y| \le R$.
\end{itemize}
Note that our assumptions imply that the field $(t,x) \mapsto M(t,x)$ is a centered Gaussian process and for each $x \in \R^d$, 
$t \mapsto M(t,x)$ has the same law as $t \mapsto GW_t$, where the $d \times d$ matrix $G$ satisfies $G G^T=a(x,x)$ and $W$ is $d$-dimensional 
Brownian motion. 
We point out that the equation
$$
\dd X_t = b(X_t)\,\dd t + \sum_{k=1}^m  \sigma_k(X_t)\, \dd W^k_t
$$
with $W^1,...W^m$ independent standard Brownian motions is a special case of \eqref{sde} if we define 
$M(t,x):=\sum_{k=1}^m \sigma_k(x) W^k_t$. Then $a_{i,j}(x,y)=\sum_{k=1}^m  \sigma_k^i(x)\sigma_k^j(y)$ and
$\cA_{i,j}(x,y)= \sum_{k=1}^m ( \sigma_k^i(x)-   \sigma_k^i(y))( \sigma_k^j(x)-   \sigma_k^j(y))$. Note that 
we have (in general) $\cA(x,y)=\frac {\dd}{\dd t} [M(.,x)-M(.,y)]_t$.\\

%
%

\begin{defi}\label{thedefinition}
\begin{itemize}
\item We say that \eqref{sde} has a {\em (strong) local solution} if for each $x \in \R^d$ and $s \ge 0$, 
there exists a stopping time 
$\tau:=\tau_{s,x}>s$ and an $\R^d$-valued adapted process $X_t,\,t\in [s,\tau)$ with continuous paths such that $X_t=x+\int_s^t b(X_u)\,\dd u 
+ \int_s^t M(\dd u,X_u)$ a.s. whenever $0<t<\tau$ and $\lim_{t \to \tau} |X_t|= \infty$ a.s. on the set $\{\tau<\infty\}$.
We say that the local solution is {\em unique}  if whenever  $\tilde X_t,\,t \ge s$ is another process with these properties with 
associated stopping time $\tilde \tau$, then $\tau=\tilde \tau$ and $X=\tilde X$ on $[s,\tau)$ almost surely. 
We will denote such a solution by $\phi_{s,t}(x)$. For $t \ge \tau$, we define $\phi_{s,t}(x):=\infty$.
\item We say that \eqref{sde} has a {\em unique global solution} or is {\em weakly complete} if it has a unique local solution and 
if for each $x \in \R^d$ and $s \ge 0$, $\tau=\infty$ almost surely. 
\item We say that   \eqref{sde} admits a {\em local semiflow}, if it has a unique local solution 
$\phi_{s,t}(x),\,t \in [s,\tau (s,x))$ 
which admits a modification 
$(\varphi_{s,t}(x), \Theta (s,x))$ which is a local semiflow, i.e.: $\varphi$ and 
$\Theta:[0,\infty)\times \R^d \times \Omega \to (0,\infty]$  are measurable and for each $\omega \in \Omega$,
\begin{itemize}
\item[i)] $(s,x) \mapsto \Theta(s,x)$ is lower semicontinuous
\item[ii)] $(s,t,x) \mapsto \varphi_{s,t}(x)$ is continuous on $\{(s,t,x):\,0\le s \le t < \Theta (s,x)\}$
\item[iii)] For all $0 \le s \le t \le u$, and $x \in \R^d$, we have 
$u<\Theta(s,x)$ iff both $t<\Theta(s,x)$ and $u < \Theta(t,\varphi_{s,t}(x))$ and in this case the following identity holds:
$\varphi_{s,u}(x)=\varphi_{t,u} ( \varphi_{s,t}(x))$
\item[iv)] $\varphi_{s,s}=\mathrm{id}_{\R^d}$ for all $s\ge 0$.
\item[v)] $\lim_{t \to \Theta(s,x)} |\varphi_{s,t}(x)|=\infty$ whenever $s \ge 0$, $x \in \R^d$, and $\Theta(s,x)<\infty.$
\end{itemize} 
\item We say that  \eqref{sde} admits a {\em global semiflow}, if it admits a local semiflow with $\Theta (s,x)=\infty$ for all 
$s\ge 0, \,x \in \R^d, \omega \in \Omega$.
\end{itemize}
\end{defi}

Note that for existence and uniqueness of local or global solutions it suffices to check the definition for $s=0$ due to time homogeneity of 
$a $ and $b$. We will also use the following definition.

\begin{defi}
We call $\phi:[0,\infty)\times \R^d \to \R^d \cup \{\infty\}$ a {\em continuous local map} if for each $x \in \R^d$, there exists 
some $\tau (x) \in (0,\infty]$ such that the following hold:
\begin{itemize}
\item[i)] $\phi_0 =\mathrm{id}|_{\R^d}$.
\item[ii)] $\phi_t(x)=\infty$ whenever $t \ge \tau(x)$.
\item[iii)] $\phi$ is (jointly) continuous with respect to the one-point compactification $\R^d \cup \{\infty\}$.
\end{itemize}
We call  $\phi:[0,\infty)\times \R^d \times \Omega  \to \R^d \cup \{\infty\}$ a {\em random continuous local map} in case 
$\phi$ is measurable and $\phi(.,\omega)$ is a continuous local map for every $\omega \in \Omega$. Further, we call 
$\phi$ a {\em continuous global map} resp.~{\em random continuous global map} if  $\phi$ is a continuous local map resp.~random 
continuous local map such that $\tau \equiv \infty$. 
\end{defi}

\begin{defi}
We say that the sde \eqref{sde} is {\em strongly complete} in case it has a unique local solution which for initial time $s=0$ admits a modification which is 
a random continuous global map. 
\end{defi}

Note that if the sde \eqref{sde} admits a global semiflow, then it is strongly complete.\\ 




In our main results, we will sometimes need the following assumptions. 
In the following hypotheses, $\mu,K \ge 0$ and $\rho: [0,\infty) \to (0,\infty)$ is a non-decreasing function such that 
$\int_0^{\infty} 1/\rho(u)\, \dd u = \infty$.\\ 

\noindent{\bf Assumption (A$_{\mu,K}$)} 
$2 \langle b(x)-b(y),x-y \rangle + \tr(\cA(x,y)) + \mu \|\A(x,y)\| \le K |x-y|^2$ for all $x,y \in \R^d$.\\

We will show in the Appendix (Proposition \ref{locglob}) that the previous assumption holds if it holds locally, i.e. 
for each $z \in \R^d$ there is a neighborhood of $z$ such that the assumption holds for all $x,y$ in that neighborhood.\\

\noindent{\bf Assumption (A$_{\mu,{\rm{loc}}}$)} 
For each $R>0$ there exists $K_R \ge 0$ such that
$2 \langle b(x)-b(y),x-y \rangle + \tr(\cA(x,y)) + \mu\|\A(x,y)\| \le K_R |x-y|^2$ for all $|x|,|y| \le R$.\\

\noindent{\bf Assumption (G$_{\rho}$)}
For all $x \in \R^d$, we have
$$
2\langle b(x),x \rangle + \tr (a(x,x)) \le \rho(|x|^2).\\
$$ 

\noindent{\bf Assumption (G)} There exists a function $\rho$ as above, for which Assumption (G$_{\rho}$) holds.\\

%


\noindent{\bf Assumption (H$_{f,\mu}$)}
$f:[0,\infty)\to (0,\infty)$ is continuous and nondecreasing, $\mu \ge 0$, and
$2 \langle b(x)-b(y),x-y \rangle + \tr(\cA(x,y)) + \mu\|\A(x,y)\| \le f(|x|\vee |y|) |x-y|^2$ 
for all $x,y \in \R^d$.\\

We will prove in the Appendix (Lemma \ref{G}) that (A$_{0,K}$) implies (G$_{\rho}$) when $\rho$ is a suitable multiple of $x \mapsto x \vee 1$ and hence 
(A$_{0,K}$) implies (G).\\ 

The paper is organized as follows. We will state sufficient conditions for existence and uniqueness of local and global solutions in Proposition 
\ref{solution}. 
We will state sufficient conditions for strong completeness in Theorem \ref{strongcompleteness}. Proposition \ref{examples} contains various explicit sufficient 
conditions for strong completeness. Theorems \ref{localflow} and \ref{globalflow}  provide sufficient conditions for the existence of a local, respectively global semiflow. 
In Section \ref{strongdelta} we define what we mean by strong $\Delta$-completeness and provide a sufficient condition for this property to hold (applying results of Ledoux and 
Talagrand on the existence of a continuous modification). In Section \ref{additivenoise} we consider the special case of additive noise in which case we can obtain better results 
(than the ones before if applied in that particular case).

Let us relate our results to prior work. Existence and uniqueness of solutions of the sde \eqref{sde} have been shown in \cite{PR07} (based on earlier work by Krylov in \cite{Kr99}) 
in case the sde is driven by a finite number of Brownian motions (they allow however random and time dependent coefficients). Our proof follows the one in \cite{PR07} initially 
but we apply a stochastic Gronwall lemma which simplifies the proof. The first major result about strong completeness of sdes is \cite{Li94}. Our results are more general in some sense 
(we do not require differentiability properties of the coefficents but just continuity and monotonicity) but less general in other respects (we only work on $\R^d$ instead of 
more general manifolds). We believe that our approach has the advantage of being straightforward and short (once existence and uniqueness of solutions and the stochastic 
Gronwall lemma are available).  We point out that \cite{FIZ07} contains strong completeness results under more restrictive conditions than ours. A recent paper dealing with strong completeness 
is \cite{CHJ13}. They only consider finite dimensional driving noise but they are more general in other respects (e.g.\ they consider Lyapunov functions while we only 
work with functions of the radial part of the solution). 

It has been observed before that in order to prove strong completeness, one needs to control both the growth of the driving vector fields and of the local Lipschitz constants 
which determine the local dispersion of the semiflow. Relaxing one of the conditions will typically require the other one to be strengthened (see Theorem 
\ref{strongcompleteness} and Proposition \ref{examples}). 
Even if the vector fields are bounded, a local Lipschitz condition  is insufficient for strong completeness as was shown in \cite{LS11}. The additive noise case is somewhat special. 
The correlation of the driving noise is such that the usual conditions on the drift (linear growth and local Lipschitz condition) suffice to show strong completeness (see 
Section \ref{additivenoise}). 
   
\section{Main results}

\begin{proposition}\label{solution}
\begin{itemize}
\item[a)] Equation \eqref{sde} has a unique local solution. Solutions enjoy the following coalescence property:
for each pair $x,y \in \R^d$, and $s,\,s' \ge 0$, the following holds true almost surely: if there exists $t \ge s\vee s'$ such that 
$\phi_{s,t}(x)=\phi_{s',t}(y)$, then $\phi_{s,u}(x)=\phi_{s',u}(y)$ for all $u \ge t$. 
\item[b)] If, moreover,  Assumption ${\mathrm{(G)}}$ holds, then equation \eqref{sde} has a unique global solution. 
\end{itemize}
\end{proposition}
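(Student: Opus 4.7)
For part (a) I would argue uniqueness first. Given two local solutions $X, \tilde X$ issued from $(s,x)$ with explosion times $\tau, \tilde\tau$, set $\sigma_R := \inf\{t\ge s: |X_t|\vee|\tilde X_t|\ge R\}$ and apply Itô's formula to $|X_t-\tilde X_t|^2$. Using the identity $\cA(x,y) = \frac{\dd}{\dd t}[M(\cdot,x)-M(\cdot,y)]_t$ together with the local monotonicity assumption, this yields
$$ |X_{t\wedge \sigma_R}-\tilde X_{t\wedge \sigma_R}|^2 \le K_R \int_s^{t\wedge \sigma_R} |X_u-\tilde X_u|^2\,\dd u + N_{t\wedge \sigma_R}, $$
where $N$ is a continuous local martingale starting at zero. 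The stochastic Gronwall lemma (advertised in the introduction as the key simplification) then forces $X = \tilde X$ on $[s,\sigma_R]$; letting $R \to \infty$ gives $\tau = \tilde\tau$ and pathwise equality up to the explosion time.

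For existence I would follow \cite{PR07} by regularization: mollify $b$ with a smooth kernel to obtain globally Lipschitz approximations $b_n$, and observe that the local monotonicity inequality persists on every ball with constants converging to $K_R$. Standard Lipschitz theory produces global solutions $X^n$ of the regularized SDE. Applying the same Itô/stochastic-Gronwall argument to $|X^n_t - X^m_t|^2$, stopped at the common exit time of radius $R$, shows that $\{X^n\}$ is Cauchy in probability on $[s, \sigma_R]$ as $n,m\to\infty$. The limit $X$ is a local solution up to the maximal time $\tau := \lim_R \sigma_R$, and $|X_t| \to \infty$ as $t \to \tau$ on $\{\tau<\infty\}$ by construction. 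Coalescence follows from pathwise uniqueness: if $\phi_{s,t}(x) = \phi_{s',t}(y)$ at some (possibly random) time $t$, then both processes from time $t$ onwards satisfy \eqref{sde} starting from the common value $\phi_{s,t}(x)$, so uniqueness forces them to coincide on $[t,\infty)$ almost surely.

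For part (b), Assumption (G) is used to rule out explosion. Applying Itô to $|X_t|^2$ together with (G) gives
$$ |X_{t\wedge \tau_R}|^2 \le |x|^2 + \int_s^{t\wedge \tau_R} \rho(|X_u|^2)\,\dd u + N'_{t\wedge \tau_R}, $$
for $\tau_R$ the first exit time of $X$ from the ball of radius $R$, with $N'$ a continuous local martingale. Setting $G(y):=\int_1^y \dd u/\rho(u)$, which diverges at infinity by the assumption on $\rho$, and applying the stochastic Gronwall lemma to $G(|X_t|^2\vee 1)$ (or, equivalently, comparing with the ODE $\dot y = \rho(y)$ whose global solvability is the content of the divergence condition) yields $\P(\tau_R < T) \to 0$ as $R \to \infty$ for every finite $T$, so $\tau=\infty$ almost surely. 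The main technical obstacle throughout is the presence of the local martingale terms $N, N'$: without a priori integrability one cannot take expectations and invoke classical Gronwall, and the stochastic Gronwall lemma is precisely the tool that bypasses this difficulty in a single step.
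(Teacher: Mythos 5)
Your uniqueness argument and your treatment of part (b) match the paper: the paper proves uniqueness by exactly the It\^o-plus-stochastic-Gronwall computation you describe, and proves non-explosion by applying It\^o's formula to $|X_t|^2$ and then invoking its Lemma 7.1 (which is precisely the ``compare with $\dot y=\rho(y)$, use $\int_0^\infty 1/\rho=\infty$'' device you sketch). The genuine gap is in your existence step. You propose to mollify $b$ and invoke ``standard Lipschitz theory,'' but this fails on three counts. First, mollifying a merely continuous $b$ yields a smooth, hence only locally Lipschitz, function; global Lipschitz approximations would require an additional truncation. Second, and more seriously, you leave the martingale field $M$ untouched: the standing assumptions give no Lipschitz control on $x\mapsto M(\cdot,x)$ whatsoever, only continuity of $a$ and the \emph{combined} one-sided bound $2\langle b(x)-b(y),x-y\rangle+\tr\cA(x,y)\le K_R|x-y|^2$, so the regularized equation is not covered by Lipschitz theory. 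Third, even if you mollify $M$ as well (say $a_n(x,y)=\iint a(x-z,y-w)\eta_n(z)\eta_n(w)\,\dd z\,\dd w$), the monotonicity condition does not obviously survive: the drift part does, via $\langle b(x-z)-b(y-z),x-y\rangle=\langle b(x-z)-b(y-z),(x-z)-(y-z)\rangle$, but $\tr\cA_n(x,y)$ involves the cross terms $a(x-z,y-w)$ with $z\ne w$ and is not an average of $\tr\cA(x-z,y-z)$, so your claim that ``the local monotonicity inequality persists with constants converging to $K_R$'' is unverified and, as stated, unsupported. The paper avoids all of this with an Euler scheme: it freezes the spatial argument of both $b$ and $M$ on intervals of length $1/n$, which requires no regularity of the coefficients at all, and then proves the Cauchy property of $\phi^{(n)}$ directly from the monotonicity assumption applied at the frozen points $\bar\phi^{(n)}_s,\bar\phi^{(m)}_s$, again via the stochastic Gronwall lemma.

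A smaller but real issue is your derivation of the coalescence property ``from pathwise uniqueness.'' Uniqueness as you proved it is for a fixed deterministic initial pair $(s,x)$, whereas at the coalescence time $T:=\inf\{t\ge s':\phi_{s,t}(x)=\phi_{s',t}(y)\}$ both the time and the common starting point are random, so you cannot simply cite the uniqueness statement. The paper instead runs the It\^o/stochastic-Gronwall argument once more, directly on $Z_t:=|\phi_{s,T+t}(x)-\phi_{s',T+t}(y)|^2\1_{\{T<\tau_{s,x}\wedge\tau_{s',y}\}}$, which starts at $0$ and is therefore forced to vanish identically. Your intended conclusion is correct, but the argument needs this extra step (or a strong Markov argument) rather than a bare appeal to uniqueness.
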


\noindent We will prove Proposition \ref{solution} in the following section.

\begin{theorem}\label{strongcompleteness}
If  {\rm{(H$_{f,\mu}$)}} holds for some $\mu > d-2$ (and $\mu \ge 0$) and \eqref{sde} admits a global solution $\phi$ such that there exist $\gamma>0$ and $t_0>0$ 
such that for any $R>0$ we have
\begin{equation}\label{finite}
\sup_{|x|\le R}\sup_{s \in [0,t_0]}\E \ee^{\gamma f(|\phi_{0,s}(x)|)}< \infty
\end{equation}
then the sde is strongly complete. Moreover, if $(t,x) \mapsto \varphi_t(x)$ is a continuous modification of $(t,x) \mapsto \phi_{0,t}(x)$, then, for each $T>0$, 
the map $x \mapsto \varphi_.(x)$ from $\R^d$ to $C([0,T],\R^d)$  
is almost surely H\"older continuous with parameter $1-\frac dq$ for every $q \in (d,\mu +2)$.
\end{theorem}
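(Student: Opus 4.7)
The plan is to establish a two-point moment estimate and feed it into Kolmogorov's continuity theorem for $C([0,T],\R^d)$-valued random fields. The moment estimate will come from It\^o's formula applied to $|Z_t|^q$, where $Z_t := \phi_{0,t}(x)-\phi_{0,t}(y)$, combined with a stochastic Gronwall inequality that absorbs the random Lipschitz factor by means of the exponential-moment hypothesis \eqref{finite}.

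Fix $q\in(d,\mu+2)$ and set $R_t := |\phi_{0,t}(x)|\vee|\phi_{0,t}(y)|$. Applying It\^o's formula to $(|Z_t|^2+\varepsilon)^{q/2}$ and sending $\varepsilon\downarrow 0$ at the end (justified because coalescence, Proposition \ref{solution}(a), forces $Z$ to stay at $0$ once it is hit, so that no pathology arises), and using that $\tfrac{d}{dt}[|Z|^2]_t = 4\langle Z_t,\cA(\phi_{0,t}(x),\phi_{0,t}(y))Z_t\rangle\le 4\|\cA\||Z_t|^2$, the bounded variation part of $d|Z_t|^q$ is dominated by
\[
\tfrac{q}{2}|Z_t|^{q-2}\bigl[2\langle Z_t,\,b(\phi_{0,t}(x))-b(\phi_{0,t}(y))\rangle+\tr(\cA)+(q-2)^+\|\cA\|\bigr]\,dt.
\]
Since $(q-2)^+<\mu$, hypothesis (H$_{f,\mu}$) collapses the bracket to $f(R_t)|Z_t|^2$, and so
\[
d|Z_t|^q\;\le\;\tfrac{q}{2}\,f(R_t)\,|Z_t|^q\,dt + dN_t
\]
for a local martingale $N$. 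A stochastic Gronwall inequality (of the type proved by Scheutzow in 2013) then yields, for any $p\in(0,1)$,
\[
\E\bigl[\sup_{s\le T}|Z_s|^{qp}\bigr]\;\le\;C_p\,|x-y|^{qp}\,\E\!\left[\exp\!\left(\tfrac{qp}{2(1-p)}\int_0^T f(R_s)\,ds\right)\right].
\]
For $T\le t_0$, Jensen's inequality in $s$ together with $f(R_s)\le f(|\phi_{0,s}(x)|)+f(|\phi_{0,s}(y)|)$ bounds the right-hand expectation by a multiple of $\sup_{|z|\le R}\sup_{s\in[0,t_0]}\E\exp(\alpha f(|\phi_{0,s}(z)|))$ with $\alpha := qpt_0/(1-p)$, which is finite by \eqref{finite} as soon as $\alpha\le\gamma$. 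Shrinking $t_0$ if necessary, one can simultaneously arrange $p>d/q$ and $\alpha\le\gamma$.

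Kolmogorov's continuity theorem applied to this estimate produces a jointly continuous modification $\varphi$ of $\phi_{0,\cdot}$ on $[0,T]\times\R^d$ whose dependence on $x$ is H\"older continuous, on each compact set, of every order strictly less than $(qp-d)/(qp)$. Letting $p\uparrow 1$ and $q\uparrow\mu+2$ reaches every exponent $1-d/q$ with $q\in(d,\mu+2)$ claimed in the theorem, which also gives strong completeness on $[0,t_0]$. The restriction $T\le t_0$ is removed by iterating the same estimate on intervals $[nt_0,(n+1)t_0]$ via the cocycle identity $\varphi_{0,nt_0+s}(x) = \varphi_{nt_0,nt_0+s}(\varphi_{0,nt_0}(x))$, and patching the H\"older bounds across consecutive slices. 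The principal technical obstacle I anticipate in this last step is propagating \eqref{finite} forward to the random shifted initial condition $\varphi_{0,nt_0}(x)$: this would use the Markov property combined with the locally uniform tightness (indeed moment bounds) of $\{\varphi_{0,nt_0}(x):|x|\le R\}$ provided by Assumption (G) and Proposition \ref{solution}, to reduce the required integrability back to the uniform-on-compacts bound \eqref{finite}.
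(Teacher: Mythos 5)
Your proposal is correct and follows essentially the same route as the paper: an It\^o estimate for a power of the two-point difference controlled via (H$_{f,\mu}$), the stochastic Gronwall lemma to pass to fractional moments weighted by an exponential of $\int f$, Jensen plus \eqref{finite} to make that weight finite on a short time interval, Kolmogorov's theorem for the $C([0,T],\R^d)$-valued field, and iteration in time. The only (immaterial) difference is that you apply It\^o directly to $|Z_t|^q$ with $q<\mu+2$ and then take the $p$-th root with $p\in(0,1)$, whereas the paper works with the exponent $p=\mu+2$ in Lemma \ref{ganznewestimate} and extracts the exponent $q\in(d,\mu+2)$ through the Gronwall step -- a reparametrization yielding the same family of estimates.
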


We will prove Theorem \ref{strongcompleteness} and the following proposition in Section \ref{strong}. 
Next, we provide some examples for functions $f$ satisfying the assumptions of the previous corollary under suitable conditions on the 
coefficients $b$ and $a$ of the sde.

\begin{proposition}\label{examples} For each of the following combinations of $b,a$, and $f$, the assumptions of Theorem \ref{strongcompleteness} are satisfied and 
hence the sde is strongly complete. In each of the cases $\beta,c>0$ are arbitrary.

\begin{tabular}{lll}
\rm{a)} \hspace{.2cm} &$\langle b(x),x\rangle,\tr(a(x,x))\le c(1+|x|^2),\quad$ &$f(u)=\beta\big((\log^+u)^2+1\big)$\\
\rm{b)} &$2\langle b(x),x\rangle + \tr(a(x,x))\le c(1+|x|^2),\quad$ &$f(u)=\beta \big(\log^+u+1\big)$\\
\rm{c)} &$b,a$ bounded, &$f(u)=\beta \big(u^2+1\big)$.
\end{tabular}
\end{proposition}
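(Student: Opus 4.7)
The plan is to verify, in each of the three cases, the two non-trivial assumptions of Theorem~\ref{strongcompleteness} beyond the standing hypothesis~(H$_{f,\mu}$): global existence of the solution and the exponential moment estimate~\eqref{finite}. Global existence is immediate in each case, since the table's bounds on $b,a$ yield $2\langle b(x),x\rangle+\tr(a(x,x))\le C(1+|x|^2)$ for a suitable $C$, which is Assumption~(G) with $\rho(u)=C(1+u)$, so Proposition~\ref{solution}~(b) supplies the global solution~$\phi_{0,t}(x)$.

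For case~(a) I apply It\^o's formula to $Z_t:=\log(1+|\phi_{0,t}(x)|^2)$. Using the two separate bounds $\langle b(x),x\rangle\le c(1+|x|^2)$ and $\tr(a(x,x))\le c(1+|x|^2)$, together with the favourable sign of the second-order correction $-2\langle X_t,a(X_t,X_t)X_t\rangle/(1+|X_t|^2)^2$, a short calculation shows that the drift of $Z_t$ is bounded by $3c$ and its instantaneous quadratic variation rate by $4c$. Hence $Z_t\le\log(1+|x|^2)+3ct+N_t$ with $N$ a continuous real martingale satisfying $[N]_t\le 4ct$, so $N_t$ is sub-Gaussian with variance proxy at most $4ct_0$ uniformly on~$[0,t_0]$. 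Combined with the elementary inequality $(\log^+|y|)^2\le\tfrac14(\log(1+|y|^2))^2$, this gives \eqref{finite} for $\gamma$ sufficiently small in terms of $c,\beta,t_0$.

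For case~(b) I apply It\^o to $(1+|\phi_{0,t}(x)|^2)^p$ for a small $p\in(0,1)$: since $p-1<0$ and $a(x,x)$ is positive semidefinite, the second-order correction is non-positive, and the hypothesis bounds the first-order drift by $pc(1+|X_t|^2)^p$, so Gronwall yields $\E(1+|\phi_{0,t}(x)|^2)^p\le(1+|x|^2)^p\ee^{pct}$; the choice $p=\gamma\beta/2<1$ together with the elementary inequality $\ee^{\gamma\beta(\log^+u+1)}\le\ee^{\gamma\beta}(1+(1+u^2)^{\gamma\beta/2})$ then delivers~\eqref{finite}. For case~(c) I use the decomposition $\phi_{0,t}(x)=x+\int_0^t b(\phi_{0,s}(x))\,\dd s+N_t$ with $N_t:=\int_0^t M(\dd s,\phi_{0,s}(x))$: boundedness of $b$ makes the drift term norm-bounded by $Kt$, while $N$ is a continuous $\R^d$-valued martingale with $\tr[N]_t\le dKt$ and each directional projection sub-Gaussian with variance proxy $Kt$, so standard exponential inequalities give $\sup_{t\le t_0}\E\ee^{\lambda|N_t|^2}<\infty$ for small $\lambda>0$, and $|\phi_{0,t}(x)|^2\le 3(|x|^2+K^2 t_0^2+|N_t|^2)$ then yields~\eqref{finite}. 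No step is genuinely hard; the point the three-line table is really illustrating is that the quality of the moment bound one can extract is governed by whether one has separate bounds on $\langle b(x),x\rangle$ and $\tr(a(x,x))$ (case~(a), doubly-logarithmic $f$), only on their combination (case~(b), singly-logarithmic $f$), or outright boundedness (case~(c), quadratic $f$).
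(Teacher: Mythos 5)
Your proposal is correct, and for case a) it is literally the paper's argument (It\^o applied to $\log(1+|X_t|^2)$, bounded drift and bounded quadratic-variation rate, hence Gaussian tails of the logarithm, hence \eqref{finite} for small $\gamma$), just with the constants written out. For b) and c) you reach the same intermediate estimates as the paper by slightly different technical means. In b) the paper takes the first equality in \eqref{square} for $|X_t|^2$ and feeds it into the stochastic Gronwall lemma (Lemma \ref{thelem}) to obtain $\sup_{|x|\le R}\sup_{s\le t}\E|\phi_{0,s}(x)|^{q}<\infty$ for all $q<2$, whereas you apply It\^o directly to $(1+|X_t|^2)^p$ with $p<1$, exploit the sign of the second-order term, and use the ordinary Gronwall inequality; both routes deliver exactly the polynomial moments of order just below $2$ that the singly-logarithmic $f$ requires, and yours avoids invoking Lemma \ref{thelem} at the cost of a routine localization step you should at least mention. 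In c) the paper applies It\^o to $(1+|X_t|^2)^{1/2}$ to see that the radial part has Gaussian tails, while you use the integral decomposition $\phi_{0,t}(x)=x+\int_0^t b\,\dd s+N_t$ and an exponential-moment bound on $|N_t|^2$ via the componentwise time-change; these are equivalent in substance. Your closing observation about which moment bound each hypothesis buys is exactly the point of the table.
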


\begin{theorem}\label{localflow}
Assume that  {\rm{(A$_{\mu,\mathrm{loc}}$)}} holds for some $\mu > d+2$. Then equation \eqref{sde} admits a local semiflow.
\end{theorem}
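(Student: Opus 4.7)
My plan is to construct the local semiflow by applying Kolmogorov's continuity criterion to solutions that have been localized at the hitting times of balls. Proposition~\ref{solution}(a) already provides a unique local solution $\phi_{s,t}(x)$, so the task reduces to building, for each $R>0$ and $T>0$, a jointly continuous modification of the stopped field $(s,t,x)\mapsto \phi_{s,t\wedge \tau_R(s,x)}(x)$, where $\tau_R(s,x):=\inf\{t\ge s:|\phi_{s,t}(x)|\ge R\}$, and then patching these along $R\nearrow\infty$ with $\Theta(s,x):=\sup_R \tau_R(s,x)$.

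The central step is a moment estimate. Writing $Z_t=\phi_{s,t}(x)-\phi_{s',t}(y)$ and $A_t=\cA(\phi_{s,t}(x),\phi_{s',t}(y))$, It\^o's formula applied to $|Z_t|^q$ for $q\ge 2$ yields
$$
\dd|Z_t|^q=\tfrac{q}{2}|Z_t|^{q-2}\bigl[2\langle Z_t,b(\phi_{s,t}(x))-b(\phi_{s',t}(y))\rangle+\tr A_t+(q-2)|Z_t|^{-2}\langle Z_t,A_tZ_t\rangle\bigr]\dd t+\dd N_t,
$$
with $N$ a local martingale. Bounding $|Z|^{-2}\langle Z,AZ\rangle\le\|A\|$ and invoking (A$_{\mu,\mathrm{loc}}$) for $q-2\le\mu$ controls the drift by $\tfrac{qK_R}{2}|Z_t|^q$ whenever both processes lie in $B_R$. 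Stopping at $\tau:=\tau_R(s,x)\wedge\tau_R(s',y)$, combining BDG with a (stochastic) Gronwall argument, and using the standard estimate $\E|\phi_{s,s'}(x)-x|^q\le C|s-s'|^{q/2}$ up to $\tau_R$, one obtains, for every $T>0$ and $q\in(d+4,\mu+2)$,
$$
\E\sup_{u\in[s\vee s',T]}|\phi_{s,u\wedge\tau}(x)-\phi_{s',u\wedge\tau}(y)|^q\le C_{R,T}\bigl(|x-y|^q+|s-s'|^{q/2}\bigr),
$$
together with the trivial $|t-t'|^{q/2}$ bound in the current-time variable.

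Under Brownian scaling of the two temporal variables, the parameter set $\{0\le s\le t\le T,\,|x|\le R\}$ has effective metric dimension $d+4$, so since $\mu>d+2$ one may choose $q>d+4$ and apply Kolmogorov's continuity criterion to extract a jointly continuous modification $\varphi^{R,T}_{s,t}(x)$ of $\phi_{s,t\wedge\tau_R}(x)$. Uniqueness from Proposition~\ref{solution} ensures that these modifications are compatible as $R$ and $T$ increase, and one defines $\varphi_{s,t}(x)$ on $\{t<\Theta(s,x)\}$ by gluing. Properties (i)--(v) of Definition~\ref{thedefinition} then follow: (iii) holds because, whenever $u<\Theta(s,x)$, both $\varphi_{s,u}(x)$ and $\varphi_{t,u}(\varphi_{s,t}(x))$ are continuous modifications of the unique solution to \eqref{sde} on $[t,u]$ with initial datum $\varphi_{s,t}(x)$, so they coincide for each fixed $(s,t,u,x)$ and hence, by continuity, on the entire domain.

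The main obstacle is that the localization $\tau_R(s,x)$ depends on the starting data, so the moment estimate above controls only the doubly stopped field, while the definition of a local semiflow requires continuity on the full open set $\{t<\Theta\}$. I would resolve this inductively in $R$: given a base point $(s_0,t_0,x_0)$ with $t_0<\Theta(s_0,x_0)$, choose $R$ strictly larger than $\sup_{u\in[s_0,t_0]}|\varphi^{R',T}_{s_0,u}(x_0)|$ for an intermediate radius $R'$, and use the continuity of $\varphi^{R,T}$ (already established by Kolmogorov) to conclude that $\tau_R(s,x)>t_0$ on a whole neighborhood of $(s_0,x_0)$. On that neighborhood $\varphi^{R,T}$ provides the required jointly continuous modification, and the lower semicontinuity of $\Theta$ together with the explosion property (v) follows directly from $\Theta=\sup_R\tau_R$ and the joint continuity of $\varphi$.
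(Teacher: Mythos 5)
Your overall architecture (localize, obtain a continuous modification at each level of the localization, then glue along increasing radii to define $\Theta=\sup_R\tau_R$ and $\varphi$) is the same as the paper's, and your final gluing paragraph is essentially the paper's argument. The gap is in the central step: you cannot ``apply Kolmogorov's continuity criterion to extract a jointly continuous modification of $\phi_{s,t\wedge\tau_R(s,x)}(x)$'' from the moment bound you derive. That bound controls $\E\sup_u|\phi_{s,u\wedge\tau}(x)-\phi_{s',u\wedge\tau}(y)|^q$ with $\tau=\tau_R(s,x)\wedge\tau_R(s',y)$ depending on \emph{both} parameter points; this is not an estimate of the increment $F(s,x)-F(s',y)$ of any single field $F$, which is what Kolmogorov's theorem requires. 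Worse, the doubly stopped field $(s,x)\mapsto\phi_{s,\cdot\wedge\tau_R(s,x)}(x)$ in general admits no continuous modification at all: if the trajectory started at $x$ grazes the sphere of radius $R$, arbitrarily close initial points may or may not be stopped, and once one of them is frozen the two paths differ macroscopically thereafter. Your proposed inductive repair is circular, since it invokes ``the continuity of $\varphi^{R,T}$ (already established by Kolmogorov)'', which is exactly the step that fails.

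The paper avoids this by localizing the \emph{coefficients} rather than the solution: it sets $b^N(x)=\psi(|x|/N)^2\,b(x)$ and $M^N(t,x)=\psi(|x|/N)\,M(t,x)$ for a smooth cutoff $\psi$, checks that the truncated data satisfy the global assumption (A$_{\mu,K}$) with bounded coefficients, and then applies the global-semiflow result of Section \ref{localandglobal} (Lemma \ref{lp} plus Kolmogorov; the bound $|x-y|^{q}+|t-t'|^{q/2}+|s-s'|^{q/2}$ and the count $q>d+4$, i.e.\ $\mu>d+2$, are exactly your dimension count) to obtain honest global semiflows $\varphi^N$ defined for all $(s,t,x)$ with no stopping. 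By uniqueness these agree inside the ball of radius $M$ for $N>M$, and the gluing then proceeds as in your last paragraph. If you insist on stopping-time localization you would have to replace Kolmogorov by a hands-on chaining argument carried out on the event that an entire dyadic grid of trajectories remains in $B_R$; truncating the coefficients is the cleaner route and is what the paper does.
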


\begin{theorem}\label{globalflow}
Assume that unique global solutions of \eqref{sde} exist. If  {\rm{(H$_{f,\mu}$)}} holds for some $\mu > d+2$ and 
there exist $\gamma>0$ and $t_0>0$ such that for any $R>0$ we have
\begin{equation}\label{finiteneu}
\sup_{|x|\le R}\sup_{s \in [0,t_0]}\E \ee^{\gamma f(|\phi_{0,s}(x)|)}< \infty,
\end{equation}
then the sde admits a global semiflow.
\end{theorem}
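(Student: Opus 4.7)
Since $\mu>d+2>d-2$, Theorem~\ref{strongcompleteness} applies and, by time-homogeneity of the coefficients, gives for each $s\ge 0$ a continuous modification $(t,x)\mapsto\tilde\varphi_{s,t}(x)$ of $(t,x)\mapsto\phi_{s,t}(x)$ on $[s,\infty)\times\R^d$. The plan is to check that these modifications combine into a jointly continuous map on the full parameter space $\{0\le s\le t\}\times\R^d$ via an anisotropic Kolmogorov continuity argument in $(s,t,x)$, and then to upgrade the almost-sure flow identity from Proposition~\ref{solution} to a surely valid semiflow identity by joint continuity.

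Fix $R,T>0$ and $q\in(d,\mu+2)$. The proof of Theorem~\ref{strongcompleteness} yields the spatial $L^q$ estimate
$$\E\sup_{t\in[s,s+T]}|\tilde\varphi_{s,t}(x)-\tilde\varphi_{s,t}(y)|^q\le C\,|x-y|^q,\qquad |x|,|y|\le R,$$
while a standard It\^o--BDG computation applied to the integral form of \eqref{sde}, combined with~\eqref{finiteneu} to control $|b|$ and $\|a\|$ along the trajectory, gives the temporal estimate
$$\E|\tilde\varphi_{s,t}(x)-\tilde\varphi_{s,t'}(x)|^q\le C\,|t-t'|^{q/2},\qquad |x|\le R,\;s\le t,t'\le s+T.$$
The crucial estimate is in the initial time. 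For $s\le s'\le t$, Proposition~\ref{solution} gives, almost surely, $\tilde\varphi_{s,t}(x)=\tilde\varphi_{s',t}(\tilde\varphi_{s,s'}(x))$. Since $\tilde\varphi_{s,s'}(x)$ is $\F_{s'}$-measurable while $\tilde\varphi_{s',\cdot}(\cdot)$ is built from the Gaussian field $M(\cdot,\cdot)-M(s',\cdot)$ on $[s',\infty)$, which is independent of $\F_{s'}$, conditioning on $\F_{s'}$ reduces to the spatial estimate for deterministic initial conditions:
$$\E|\tilde\varphi_{s,t}(x)-\tilde\varphi_{s',t}(x)|^q=\E|\tilde\varphi_{s',t}(\tilde\varphi_{s,s'}(x))-\tilde\varphi_{s',t}(x)|^q\le C\,\E|\tilde\varphi_{s,s'}(x)-x|^q\le C'|s'-s|^{q/2},$$
the last bound being the temporal estimate on $[s,s']$.

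Combining the three estimates via the triangle inequality yields, for $s\le s'$, $|x|,|x'|\le R$, and $s,s',t,t'\in[0,T]$,
$$\E|\tilde\varphi_{s,t}(x)-\tilde\varphi_{s',t'}(x')|^q\le C\bigl(|s-s'|^{q/2}+|t-t'|^{q/2}+|x-x'|^q\bigr).$$
The parameter $(s,t,x)$ carries anisotropic H\"older exponents $1/2,1/2,1,\dots,1$, so the relevant Kolmogorov threshold is the effective dimension $\sum_i 1/\alpha_i=2\cdot 2+d=d+4$; since $\mu>d+2$ one may choose $q\in(d+4,\mu+2)$, and the anisotropic Kolmogorov criterion produces a jointly H\"older continuous modification $\varphi_{s,t}(x)$ on every box $\{0\le s\le t\le T\}\times\{|x|\le R\}$. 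Covering $\{0\le s\le t\}\times\R^d$ by countably many such boxes yields the desired continuous global map. The semiflow identity of Definition~\ref{thedefinition}(iii) holds almost surely for every fixed $(s,t,u,x)$ in a countable dense set by uniqueness (Proposition~\ref{solution}); joint continuity then makes it hold simultaneously for all parameters on a set of full probability. Properties (iv) and (v) follow from $\tilde\varphi_{s,s}=\mathrm{id}$ and the hypothesis that global solutions exist (so $\Theta\equiv\infty$).

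The main obstacle is the $s$-regularity step: passing from the almost sure flow identity to an $L^q$ bound on $\tilde\varphi_{s,t}(x)-\tilde\varphi_{s',t}(x)$ requires pushing the spatial estimate through the random, $\F_{s'}$-measurable initial condition $\tilde\varphi_{s,s'}(x)$. This relies on the independence of the post-$s'$ noise from $\F_{s'}$ (from the Gaussian nature of $M$) together with a localization argument that uses~\eqref{finiteneu} to handle the fact that the spatial bound is only stated locally in the initial condition; matching the resulting H\"older exponent $1/2$ in $s$ against the effective Kolmogorov dimension is what forces the quantitative hypothesis $\mu>d+2$ rather than the weaker $\mu>d-2$ sufficient for strong completeness alone.
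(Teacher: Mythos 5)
Your architecture is genuinely different from the paper's. The paper does \emph{not} attempt a three-variable Kolmogorov argument under the hypotheses of Theorem \ref{globalflow}. Instead it first proves Theorem \ref{localflow}: since (H$_{f,\mu}$) implies (A$_{\mu,\mathrm{loc}}$), one truncates $b$ and $M$ outside balls of radius $N$, obtains globally bounded coefficients satisfying the global condition (A$_{\mu,K}$), proves the joint $(s,t,x)$-estimate of Lemma \ref{lp} \emph{for the truncated equations}, applies Kolmogorov there to get global semiflows $\varphi^N$, and patches these into a local semiflow. Only then is Theorem \ref{strongcompleteness} invoked, at each rational initial time $s$, to conclude $\Theta(s,x)=\infty$ on a common full-measure set. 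Your plan of running Kolmogorov directly on the untruncated solution is the natural first attempt, and your exponent counting ($q>d+4$, forcing $\mu>d+2$) agrees with the paper's; but it runs into two genuine gaps that the truncation is designed to avoid.

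First, your temporal estimate $\E|\tilde\varphi_{s,t}(x)-\tilde\varphi_{s,t'}(x)|^q\le C|t-t'|^{q/2}$ is asserted to follow from It\^o--BDG ``combined with \eqref{finiteneu} to control $|b|$ and $\|a\|$ along the trajectory.'' It does not: \eqref{finiteneu} controls exponential moments of $f(|\phi|)$, and $f$ only bounds the one-sided quantity $2\langle b(x)-b(y),x-y\rangle+\tr\cA+\mu\|\cA\|$; the standing assumptions give no growth bound on $|b(x)|$ or $\tr a(x,x)$ whatsoever (e.g.\ a huge tangential drift component is compatible with (H$_{f,\mu}$) for constant $f$). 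So $\E|\int_t^{t'}b(\phi_{s,u}(x))\,\dd u|^q$ need not be finite, let alone $O(|t'-t|^{q/2})$ uniformly over $|x|\le R$. In the paper this step (\eqref{second} in Lemma \ref{lp}) is legitimate only because $a,b$ are there assumed globally bounded. Second, your initial-time step conditions on $\F_{s'}$ and applies the spatial estimate at the random point $\tilde\varphi_{s,s'}(x)$; the conditional form of Lemma \ref{ganznewestimate} (Remark \ref{remark}) then requires $\E\exp\{\frac{Pq}{2}\int f(|\phi_{s',u}(z)|\vee|\phi_{s',u}(y)|)\dd u\}$ to be bounded uniformly over \emph{all} $z\in\R^d$, whereas \eqref{finiteneu} gives this only for $|z|\le R$, and $\tilde\varphi_{s,s'}(x)$ is unbounded. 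You name ``a localization argument'' here but do not supply it; carried out honestly, the localization amounts to modifying the coefficients outside a large ball and patching --- i.e.\ to reproducing Theorem \ref{localflow} --- after which non-explosion must still be imported from Theorem \ref{strongcompleteness} exactly as the paper does.
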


We will prove Theorems   \ref{localflow} and  \ref{globalflow} in Section \ref{localandglobal}. We do not know if Theorem \ref{globalflow} 
remains true under the slightly weaker assumptions of Theorem \ref{strongcompleteness}

\begin{remark}
The case $d=1$ is special and in that case better results can be achieved due to the fact that $\R$ is totally ordered. In this case weak completeness 
plus existence of a local semiflow are (more than) enough to guarantee strong completeness (and even existence of a global semiflow). 
\end{remark}

\section{Existence and uniqueness of a local and global solution}

\begin{proof}[ of Proposition \ref{solution}]
We first show existence of a local solution. The first part of the proof is an adaptation of arguments in Chapter 3 of \cite{PR07} 
(based on previous work of Krylov \cite{Kr99}), while the final part is similar to a corresponding proof for stochastic functional 
differential equations (with finite dimensional noise) in 
\cite{RS10} (using a stochastic Gronwall lemma). Due to time-homogeneity of the coefficients, we can and will assume that $s=0$. Fix $x\in \R^d$.
Increasing the number $K_R$ if necessary, we can and will assume that $\sup_{|x|\le R}|b(x)|\le K_R$ for each $R>0$.
To prove existence of a solution, we employ an Euler scheme.
For $n \in \N$, we define the process $(\phi^{(n)}_t)_{t\in [0, \infty)}$ by
$\phi^{(n)}_0 := x \in \R^d$
and -- for $ k \in \N_0 $ and $ t\in (\frac{k}{n},\frac{k+1}{n}]$ -- by
\begin{equation}\label{def1}
\phi^{(n)}_t := \phi^{(n)}_{\frac{k}{n}}\ + \int^{t}_{\frac{k}{n}} b(\phi^{(n)}_{\frac{k}{n}} )\,\dd s \ + 
\int^{t}_{\frac{k}{n}} M(\dd s,\phi^{(n)}_{\frac{k}{n}} ),
\end{equation}
which is equivalent to
\begin{equation}\label{def2}
	\phi^{(n)}_t = x \ + \int^{t}_{0} b(\bar{\phi}^{(n)}_{s})\,\dd s \ 
       + \int^{t}_{0} M(\dd s,\bar{\phi}^{(n)}_{s} )
\end{equation}
for $t\in [0 , \infty)$, where $\bar{\phi}^{(n)}_{s}:=\phi^{(n)}_{\frac{\left\lfloor ns\right\rfloor}{n}}$.
Defining $ p^{(n)}_t :=\bar{\phi}^{(n)}_{t}- \phi^{(n)}_{t}$, we obtain
\begin{equation}
	\phi^{(n)}_t = x \ + \int^{t}_{0} b(\phi^{(n)}_{s} + p^{(n)}_s)\,\dd s \ 
+ \int^{t}_{0} M(\dd s,\phi^{(n)}_{s} + p^{(n)}_s )
\end{equation}
for $t \in [0,\infty).$ Observe that $t \mapsto \phi^{(n)}_t $ is adapted and continuous.
Using It\^o's formula, we obtain for $t \ge 0$
\begin{align*}
&\big|\phi^{(n)}_t -\phi^{(m)}_t\big|^2\\
&= \int^t _0  2\left\langle \phi^{(n)}_s-\phi^{(m)}_s, b(\bar{\phi}^{(n)}_s)-b(\bar{\phi}^{(m)}_s)\right\rangle \dd s\\ 
&\ \ + \int^t_0  2\left\langle \phi^{(n)}_s -\phi^{(m)}_s,M(\dd s,\bar{\phi}^{(n)}_s)-M(\dd s,\bar{\phi}^{(m)}_s)\right\rangle
 + \int^t_0  \tr(\A(\bar{\phi}^{(n)}_s,\bar{\phi}^{(m)}_s))\,\dd s\\
&=\int^t _0  2\left\langle \bar{\phi}^{(n)}_s-\bar{\phi}^{(m)}_s, b(\bar{\phi}^{(n)}_s)-b(\bar{\phi}^{(m)}_s)\right\rangle \dd s 
\ \ +\int^t_0 \tr(\A(\bar{\phi}^{(n)}_s,\bar{\phi}^{(m)}_s))\,\dd s\\ 
&\ \ + M^{(n,m)}_t
\ \ + \int^t_0 2\left\langle p^{(m)}_s- p^{(n)}_s, b(\bar{\phi}^{(n)}_s)-b(\bar{\phi}^{(m)}_s)\right\rangle \dd s,
\end{align*}
where
$$M^{(n,m)}_t:= \int^t_0  2\langle \phi^{(n)}_s -\phi^{(m)}_s, M(\dd s,\bar{\phi}^{(n)}_s)-M(\dd s,\bar{\phi}^{(m)}_s)\rangle$$
is a continuous local martingale starting at 0. 
Let $R>3 |x|$ and define the stopping times
$$\tau^{(n)}(R):=\inf \left\{t\geq 0 : \big|\phi^{(n)}_t\big| \ge \frac{R}{3}\right\}.$$ 
Then
\begin{equation}\label{bounded}
\big|p^{(n)}_t\big|\leq \frac{2R}{3} \quad \text{ and }\quad \big|\phi^{(n)}_t\big|\leq \frac{R}{3}
 \quad \text{ for } t\in [0, \tau^{(n)}(R)]\cap [0,\infty).
\end{equation}
For $0\le s \le \tau^{(n)}(R)\wedge\tau^{(m)}(R)=:\gamma^{(n,m)}(R)$, we have
\begin{align*}
\langle p^{(m)}_s- p^{(n)}_s, b(\bar{\phi}^{(n)}_s)-b(\bar{\phi}^{(m)}_s)\rangle \leq 2K_R\left|p^{(m)}_s- p^{(n)}_s\right|
\le 2 K_R \big( |p_s^{(m)}| + |p_s^{(n)}| \big).
\end{align*}
Therefore, for $t \le \gamma^{(n,m)}(R)$, we get
\begin{align*}
&\big|\phi^{(n)}_{t} -\phi^{(m)}_{t}\big|^2\\
\quad &\leq \int^{t}_0 (K_R\left|\bar{\phi}^{(n)}_s -\bar{\phi}^{(m)}_s\right|^2 
+ 2\left\langle p^{(m)}_s- p^{(n)}_s, b(\bar{\phi}^{(n)}_s)-b(\bar{\phi}^{(m)}_s)\right\rangle)\,\dd s +M^{(n,m)}_t\\
\quad &\leq \int^{t}_0  2K_R(\left|\phi^{(n)}_s-\phi^{(m)}_s\right|^2 +\left|p^{(n)}_s-p^{(m)}_s\right|^2) 
+ 4K_R(\left|p^{(n)}_s\right|+\left|p^{(m)}_s\right|)\,\dd s  + M^{(n,m)}_t\\
 &\leq\int^{t}_0  2K_R  \left|\phi^{(n)}_s-\phi^{(m)}_s\right|^2\dd s
 + 4\int^{t}_0  K_R(\left|p^{(n)}_s\right|+\left|p^{(m)}_s\right|+\left|p^{(n)}_s\right|^2+ \left|p^{(m)}_s\right|^2)\,\dd s
+ M^{(n,m)}_t.
\end{align*}
Now, we apply Lemma \ref{thelem} to the process $Z_t:=\big|\phi^{(n)}_{t\wedge \gamma^{(n,m)}(R)} -\phi^{(m)}_{t\wedge \gamma^{(n,m)}(R)}\big|^2$. 
Note that the assumptions are satisfied with $\psi:=2 K_R$, 
$M_t:=M^{(n,m)}_{t\wedge \gamma^{(n,m)}(R)}$, and 
$$
H_t:= H^{n,m}_t:=4\int^{t\wedge \gamma^{(n,m)}(R)}_0  K_R(\left|p^{(n)}_s\right|+\left|p^{(m)}_s\right|+\left|p^{(n)}_s\right|^2+
\left|p^{(m)}_s\right|^2)\,\dd s.$$ 
Therefore, for $p \in (0,1)$ and all $T >0$, we have 
\begin{align}\label{esti}
\begin{split}
	\E \left[\sup_{t\in [0,T]}\left|\phi^{(n)}_{t\wedge \gamma^{(n,m)}(R)} -\phi^{(m)}_{t\wedge \gamma^{(n,m)}(R)}\right|^{2p}\right]
&\leq \tilde c_p\ee^{2p K_R T} \Big(\E\Big[\sup_{t\in [0,T]} \big(H^{n,m}_t\big)^p\Big]\Big)\\ 
&=  \tilde c_p\ee^{2p K_R T}          \left(\E\left[H^{n,m}_T\right]^p\right).
\end{split}
\end{align}
It is easy to check that $\sup_{s \ge 0}\E|p^{(n)}_{s\wedge \tau^{(n)}(R)}|$ converges to 0 as $n \to \infty$ since the coefficients of the sde 
are bounded on bounded subsets of $\R^d$. Thanks to \eqref{bounded}, the  
right hand side of \eqref{esti} therefore converges to 0 as $n,m \rightarrow \infty$.
Now, by standard arguments, we can find a stopping time $\tau_R>0$, a process $\phi_t, \,t \in [0,\tau(R)] \cap [0,\infty)$, 
such that $\tau_R = \inf\{t \ge 0: |\phi_t|=R\}$  such that $\phi^{n}\to \phi_t$ on 
$[0,\tau_R] \cap [0,t]$ uniformly in probability for each $t>0$. 
Further, $\phi$ solves the  sde  \eqref{sde} with initial condition $x$ 
on the interval $[0,\tau_R] \cap [0,\infty)$ (see \cite{RS10} for 
details -- in the more general case of a stochastic delay differential equation).

Assume that there is another solution $\tilde \phi_t, \,t \in [0,\tilde \tau_R] \cap [0,\infty)$ with associated stopping time
$\tilde \tau=\inf\{t \ge 0: |\tilde \phi_t|=R\}$ which solves  \eqref{sde} with initial condition $x$ 
on the interval $[0,\tau_R] \cap [0,\infty)$.
Then we can apply It\^o's formula to the square of the norm of the difference of the 
two processes up to the minimum of the two stopping times and use Lemma \ref{thelem} to see that both solutions (and the associated 
stopping times) agree almost surely.

Now we let $R \to \infty$. Define $\tau:=\lim_{R \to \infty} \tau_R$. Then the construction above shows that there exists a unique 
solution $\phi$ of  \eqref{sde} with initial condition $x$ 
on the interval $[0,\tau)$ and that $\tau$ has all properties stated in the first part of Definition \ref{thedefinition}.

Now we show the coalescence property. Let $x,y \in \R^d$ and $0 \le s \le s'$. Define the stopping time 
$T:=\inf\{t \ge s':\phi_{s,t}(x)=\phi_{s',t}(y)\}$. 
On the set where $\phi_{s,T}(x)=\phi_{s',T}(y)=\infty$ and on the set $\{T=\infty\}$ there is nothing to prove. Therefore, we define
$$
Z_t:=|\phi_{s,T+t}(x)-\phi_{s',T+t}(y)|^2 \1_{\{T<\tau_{s,x}\wedge \tau_{s',y}\}},\quad t \ge 0.
$$
Applying Ito's formula and Lemma \ref{thelem} (as above), we see that $Z \equiv 0$,
so the proof of part a) is complete.\\


Next, we prove part b) of the proposition. W.l.o.g.~we assume that $s=0$. Let $(X,\sigma)$ be a maximal strong solution of 
the sde \eqref{sde} starting at $x$. We want to show that $\sigma=\infty$ almost surely. Note that 
$\limsup_{t \nearrow \sigma} |X_t|=\infty$ almost surely on the set $\{\sigma<\infty\}$. For a stopping time $0 \le \tau < \sigma$, 
It\^o's formula implies that
\begin{align}\label{square}
\begin{split}
X^2_{\tau}-X^2_0
&=\int_0^{\tau} 2\langle b(X_u),X_u\rangle + \tr(a(X_u,X_u))\, \dd u +
2 \int_0^{\tau} \langle X_u,M(\dd u,X_u) \rangle\\
&\le \int_0^{\tau} \rho(|X_u|^2)\, \dd u + \tilde M_\tau,
\end{split}
\end{align}
where $\tilde M$ is a continuous local martingale. Applying Lemma \ref{thefirstlemma} to $Z_t:=|X_t|^2$ finishes the proof. 
\end{proof}

The following proposition is a straightforward consequence of the uniqueness of local solutions.

\begin{proposition}\label{composition}
Under the assumptions of part a) of the previous proposition, the following holds for any modification of the (unique) local 
solution $(\phi,\tau)$: for each $0 \le s \le t \le u$ and $x \in \R^d$, there exists 
a set of full measure $\Omega_0$, such that on the set $\Omega_0$ the following holds:
\begin{itemize}
\item [$\bullet$] $u<\tau(s,x)$ iff both $t<\tau(s,x)$ and $u<\tau(t,\phi_{s,t}(x))$ 
\item [$\bullet$]
$\phi_{s,u}(x,\omega)=\phi_{t,u}(\phi_{s,t}(x,\omega),\omega)$ whenever $u<\tau(s,x)$.
\end{itemize}
\end{proposition}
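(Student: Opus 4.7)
The plan is to deduce both bullet points simultaneously from uniqueness of local solutions (part~a) of Proposition~\ref{solution}), applied to solutions starting at time $t$ from the random, $\F_t$-measurable initial condition $\phi_{s,t}(x)$. Fix $0\le s\le t\le u$ and $x\in\R^d$. First I would observe that on the event $\{t<\tau(s,x)\}$, the restriction of the solution $\phi_{s,\cdot}(x)$ to the interval $[t,\tau(s,x))$ is, by additivity of the time integral and of the stochastic integral with respect to the martingale field $M$, an adapted continuous process $Y_r$ satisfying
$$
Y_r = \phi_{s,t}(x) + \int_t^r b(Y_v)\,\dd v + \int_t^r M(\dd v,Y_v),\qquad t\le r<\tau(s,x),
$$
with $\lim_{r\nearrow\tau(s,x)}|Y_r|=\infty$ on $\{t<\tau(s,x)<\infty\}$. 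Thus $Y$ is a local solution of \eqref{sde} starting at time $t$ from the $\F_t$-measurable initial condition $\phi_{s,t}(x)$.

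Next, I would argue that the uniqueness statement of part~a) of Proposition~\ref{solution}, originally formulated for deterministic initial conditions, extends to $\F_t$-measurable initial conditions. This is standard: inspecting the uniqueness proof, which proceeds via It\^o's formula applied to the square of the norm of the difference of two putative solutions and an application of Lemma~\ref{thelem}, one sees that the argument is insensitive to whether the initial condition at time $t$ is deterministic or $\F_t$-measurable, as long as both solutions share that same initial value. (Alternatively, one can disintegrate over the distribution of $\phi_{s,t}(x)$ under a regular conditional probability given $\F_t$ and invoke the deterministic result for a.e.\ realization.) Applying this extended uniqueness to the two local solutions $Y_r$ and $r\mapsto\phi_{t,r}(\phi_{s,t}(x))$, both starting at time $t$ from the common $\F_t$-measurable initial condition $\phi_{s,t}(x)$, yields a full-measure set on which their explosion times coincide and the two processes agree on their common lifetime.

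From this coincidence the proposition follows in a single stroke. Let $\Omega_0$ be the intersection of the above full-measure set with $\{t<\tau(s,x)\}^c\cup$ the set where the agreement holds; equivalently, $\Omega_0$ consists of those $\omega$ for which on $\{t<\tau(s,x)\}$ one has $\tau(s,x)=\tau(t,\phi_{s,t}(x))$ and $\phi_{s,r}(x)=\phi_{t,r}(\phi_{s,t}(x))$ for all $r\in[t,\tau(s,x))$. On $\Omega_0$: if $u<\tau(s,x)$ then automatically $t\le u<\tau(s,x)$ and $u<\tau(t,\phi_{s,t}(x))$, and the flow identity $\phi_{s,u}(x)=\phi_{t,u}(\phi_{s,t}(x))$ holds; conversely, if both $t<\tau(s,x)$ and $u<\tau(t,\phi_{s,t}(x))$ then $u<\tau(s,x)$ by equality of the explosion times. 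The main (and only real) obstacle is the extension of uniqueness to $\F_t$-measurable initial conditions, but as indicated this is routine given that the uniqueness proof in part~a) relies solely on It\^o's formula and a Gronwall-type estimate.
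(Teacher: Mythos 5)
Your proposal is correct and follows essentially the route the paper intends: the paper gives no written proof beyond the remark that the proposition ``is a straightforward consequence of the uniqueness of local solutions,'' and your argument---restricting $\phi_{s,\cdot}(x)$ to $[t,\tau(s,x))$, recognising it as a local solution started at time $t$ from the $\F_t$-measurable value $\phi_{s,t}(x)$, and invoking uniqueness extended to random initial conditions (via the Gronwall argument or disintegration over $\F_t$, using independence of the post-$t$ noise)---is exactly the standard elaboration of that remark, and you correctly flag the only genuine subtlety, namely the substitution of a random initial condition.
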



\section{Strong completeness}\label{strong}
Our next aim is to establish sufficient conditions for strong completeness of an sde. We will show the 
existence of a (H\"older) continuous modification with the help of Kolmogorov's continuity theorem. Therefore, 
we will start by providing suitable $L^p$-estimates for the difference of solutions with different initial conditions.

\begin{lemma}\label{ganznewestimate}
Let $p \ge 2$ and assume that {\bf (H$_{f,p-2}$)} holds and that global solutions exist (for which {\bf (G)} is sufficient). 
Further, let $0<q<p$ and $P,Q > 1$ be such that $\frac 1P + \frac 1Q =1$ and $qQ/p<1$. Then, for each 
$0 \le s \le T$,
$$
\E\sup_{s \le t \le T}|\phi_{s,t}(x)-\phi_{s,t}(y)|^q \le 
|x-y|^q   \tilde c_{qQ/p}^{1/Q}    
\Big( \E \exp \Big\{\frac  {Pq}{2}\int_s^T  f(|\phi_{s,u}(x)| \vee |\phi_{s,u}(y)|) \dd u \Big\} \Big) ^{1/P}, 
$$
where the constant $\tilde c_r$ is defined before Lemma \ref{thelem}.
\end{lemma}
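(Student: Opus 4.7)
\noindent\emph{Proof plan.} The plan is to derive, by It\^o's formula, a differential inequality of the form
\[
U_t \le |x-y|^p + \int_s^t \tfrac{p}{2} f(|\phi_{s,u}(x)|\vee |\phi_{s,u}(y)|)\, U_u\, \dd u + \hat M_t, \qquad t\in[s,T],
\]
where $U_t := |\phi_{s,t}(x) - \phi_{s,t}(y)|^p$ and $\hat M$ is a continuous local martingale with $\hat M_s=0$, and then to extract the bound on $\E \sup_t U_t^{q/p}$ by invoking the stochastic Gronwall lemma (in a variable-coefficient form of Lemma \ref{thelem}) combined with H\"older's inequality for the conjugate pair $(P,Q)$ satisfying $qQ/p<1$.

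To obtain the inequality I would write $x_t := \phi_{s,t}(x)$, $y_t := \phi_{s,t}(y)$, $Z_t := x_t - y_t$, $V_t := |Z_t|^2$, and first compute via It\^o's formula
\[
\dd V_t = \bigl[2\langle Z_t, b(x_t) - b(y_t)\rangle + \tr \cA(x_t,y_t)\bigr]\dd t + \dd N_t,
\]
where $N$ is a continuous local martingale with $\dd[N]_t = 4\langle Z_t, \cA(x_t,y_t) Z_t\rangle\,\dd t$. Because $\cA(x_t,y_t)$ is positive semidefinite (being the instantaneous covariance matrix of $M(\dd t,x_t) - M(\dd t,y_t)$), one has $\langle Z_t, \cA(x_t,y_t) Z_t\rangle \le \|\cA(x_t,y_t)\|\, V_t$. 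I would then apply It\^o to $U_t := V_t^{p/2}$, using the standard regularization $(V_t+\varepsilon)^{p/2}$ with $\varepsilon \downarrow 0$ to handle the lack of $C^2$-smoothness at the origin when $p < 4$; since $p \ge 2$, the Hessian contribution is bounded above by $\tfrac{p(p-2)}{2} V_t^{p/2-1}\|\cA(x_t,y_t)\|\,\dd t$. Combining this with hypothesis (H$_{f,p-2}$) dominates the drift of $U$ by $\tfrac{p}{2} f(|x_t|\vee|y_t|) U_t$, and integration then gives the displayed inequality.

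The final step is to apply the stochastic Gronwall lemma to $U$ with random coefficient $\psi_u = \tfrac{p}{2} f(|x_u|\vee|y_u|)$ and fractional exponent $q/p \in (0,1)$. A H\"older split with the conjugate pair $(P,Q)$ puts the fractional-moment part on the $Q$-side, where Lemma \ref{thelem} at the admissible exponent $qQ/p<1$ supplies the constant $\tilde c_{qQ/p}^{1/Q}$ together with the prefactor $|x-y|^q$, while the $P$-side carries the exponential moment $\bigl(\E \exp\bigl(\tfrac{Pq}{2}\int_s^T f\,\dd u\bigr)\bigr)^{1/P}$. I expect the main obstacle to lie in precisely this last step: one has to line up the exponents in the variable-coefficient Gronwall lemma with the H\"older split so that the resulting constants match $\tilde c_{qQ/p}^{1/Q}$ in front and the factor $Pq/2$ in the exponential exactly. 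The It\^o computation itself is routine, modulo the regularization at zero and a standard localization ensuring that $\hat M$ is a genuine local martingale on $[s,T]$.
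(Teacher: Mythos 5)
Your proposal is correct and follows essentially the same route as the paper: apply It\^o's formula to $|\phi_{s,t}(x)-\phi_{s,t}(y)|^p$, bound the drift by $\tfrac p2 f(|\phi_{s,\cdot}(x)|\vee|\phi_{s,\cdot}(y)|)$ times the process using {\rm (H$_{f,p-2}$)} together with $\langle Z,\cA Z\rangle\le\|\cA\|\,|Z|^2$, and then invoke the stochastic Gronwall lemma (Lemma \ref{thelem}) with exponent $r=q/p$ and the H\"older pair $(P,Q)$. The only cosmetic difference is that you regularize via $(|Z_t|^2+\varepsilon)^{p/2}$ while the paper applies It\^o directly to $|D_t|^p$ and interprets the degenerate term at $D_t=0$ as zero; the exponent bookkeeping in the final step works out exactly as you anticipate.
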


\begin{proof}
It suffices to prove the lemma in case $s=0$. Fix $x,\,y \in \R^d$, $x \neq y$.
Define
$$D_{t} := \phi_t(x) - \phi_t(y), \, Z_t:=|D_{t}|^p.$$
Then, by It\^o's formula, 
\begin{align*}
\dd Z_t&=p|D_t|^{p-2}\langle b(\phi_t(x))-b(\phi_t(y)), D_t \rangle\, \dd t + p|D_t|^{p-2}\langle D_t,M(\dd t,\phi_t(x))-M(\dd t,\phi_t(y))\rangle\\
&+ \frac 12 p |D_t|^{p-2}\tr (\A(\phi_t(x),\phi_t(y))) \,\dd t 
+ \frac 12 p(p-2)|D_t|^{p-4}\langle D_t, \A(\phi_t(x),\phi_t(y)) D_t\rangle\, \dd t,
\end{align*}
where the last term should be interpreted as zero when $D_t=0$ even if $p<4$. Therefore, using (H$_{f,p-2}$), we get
$$
Z_{t} \le |x-y|^{p} + \frac p2\int_0^t Z_u f(|\phi_u(x)| \vee |\phi_u(y)|)\,  \dd u + N_t,
$$
where $N$ is a continuous local martingale starting at 0. Lemma \ref{thelem} implies
$$
\E \sup_{0 \le s \le t}Z_s^r \le |x-y|^{rp}  \tilde c_{Qr}^{1/Q} 
\big(\E \ee^{Pr\int_0^t\psi_u \dd u}\big)^{1/P}, 
$$
where $\psi_u:= \frac p2 f(|\phi_u(x)| \vee |\phi_u(y)|)$, $r:=q/p$, $P,Q>1$, $rQ<1$, and $\frac 1P + \frac 1 Q=1$, 
so the assertion of the lemma follows.
\end{proof}

\begin{remark}\label{remark}
Clearly, Lemma \ref{ganznewestimate} remains true if the expectations are replaced by the conditional expectations given 
$\F_s$ since $\phi$ has independent increments.
\end{remark}

%

\begin{proof}[ of Theorem \ref{strongcompleteness}]
Fix $p,q,P,Q$ as in Lemma \ref{ganznewestimate}. Using Jensen's inequality, we get for $t>0$
\begin{align*}
\sup_{|x|,|y|\le R}\E \exp\Big\{ \frac{Pq}2 &\int_0^t f(|\phi_{u}(x)|\vee|\phi_{u}(y)|)\,\dd u \Big\}\\ 
&\le \sup_{|x|,|y|\le R} \sup_{u \in [0,t]} \E \exp\Big\{ \frac{Pq}2 t  f(|\phi_{u}(x)|\vee|\phi_{u}(y)|)\Big\}\\ 
&\le 2\sup_{|x|\le R} \sup_{u \in [0,t]} \E \exp\Big\{ \frac{Pq}2 t  f(|\phi_{u}(x)|)\Big\}.\\ 
\end{align*}
Choosing $t>0$ sufficiently small, the right hand side is finite for any choice of $R$. Using Lemma \ref{ganznewestimate} it follows from Kolmogorov's continuity theorem with $q \in (d,p)$ 
that there exists a modification $\varphi$ of $\phi$ which is continuous on $\R^d \times [0,t_1]$ for some $t_1>0$ and that this modification  has the 
stated H\"older continuity property. Iterating, we obtain a H\"older continuous modification on  $\R^d \times [0,\infty)$. 
\end{proof}

\begin{proof}[ of Proposition \ref{examples}]
To show b), we use the first equality in \eqref{square} and then apply Lemma \ref{G} to get $\sup_{|x| \le R} \sup_{0 \le s \le t}|\phi_{0,s}(x)|^p<\infty$ for every 
$R,t >0$ and $p\in (0,2)$. Therefore b) follows. 

To show a), apply It\^o's formula to $Y_t:=\log \big(|X_t|^2 + 1\big)$ and use  the first equality in \eqref{square} to see that $Y$ has Gaussian tails  
uniformly on compact subsets of $\R^d \times [0,\infty)$. Therefore, a) follows.

To show c),  apply It\^o's formula to $Y_t:=\big(|X_t|^2 + 1)^{1/2}$  and use  the first equality in \eqref{square} to see that $Y$ has Gaussian tails     
uniformly on compact subsets of $\R^d \times [0,\infty)$. Therefore, c) follows.
\end{proof}

\section{Local and global semiflows}\label{localandglobal}


\begin{lemma}\label{lp} 
Assume that {\rm{(A$_{\mu,K}$)}} holds for some $\mu,K\ge 0$ and that $a$ and $b$ are globally bounded.
For each $T>0$ and $q\in (\mu + 2)$, there exists a constant $c \ge 0$ such that for all $x,y \in \R^d$ and all $0 \le s \le t\le T$, 
$0 \le s' \le t'\le T$, we have
$$ 
\E \big(|\phi_{s,t}(x)-\phi_{s',t'}(y)|^{q}\big) \le c\,\big(|x-y|^{q} + |t'-t|^{q/2} + |s'-s|^{q/2}\big).
$$
\end{lemma}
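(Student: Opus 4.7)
The plan is to decompose the increment $\phi_{s,t}(x)-\phi_{s',t'}(y)$ via the $L^q$ triangle inequality into three pieces --- a purely spatial piece $\phi_{s',t}(x)-\phi_{s',t}(y)$, a terminal-time piece $\phi_{s',t'}(y)-\phi_{s',t}(y)$, and an initial-time piece $\phi_{s,t}(x)-\phi_{s',t}(x)$ --- and to bound each separately.

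For the spatial piece I would apply Lemma \ref{ganznewestimate}. Boundedness of $a$ and $b$ together with (A$_{\mu,K}$) yields (H$_{f,\mu}$) with the constant function $f\equiv K$; boundedness also implies (G), so global solutions exist by Proposition \ref{solution}(b). Pick $p\in(q,\mu+2)$ (possible since $q<\mu+2$) and $P>1$ with $qP/p<1$. Since $f$ is constant, the exponential moment in Lemma \ref{ganznewestimate} reduces to the deterministic quantity $\exp(PqKT/2)$, giving
$$
\E|\phi_{s',t}(x)-\phi_{s',t}(y)|^{q}\;\le\; c_{1}\,|x-y|^{q}
$$
uniformly for $0\le s'\le t\le T$.

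For the terminal-time piece (with $t\le t'$), I would write
$$
\phi_{s',t'}(y)-\phi_{s',t}(y)\;=\;\int_{t}^{t'}b(\phi_{s',u}(y))\,\dd u\;+\;\int_{t}^{t'}M(\dd u,\phi_{s',u}(y)).
$$
Global boundedness of $b$ gives a pointwise bound of $\|b\|_\infty |t'-t|$ for the drift, while global boundedness of $a$ bounds the quadratic variation of the stochastic integral by $\|a\|_\infty |t'-t|$. The Burkholder--Davis--Gundy inequality then delivers $\E|\phi_{s',t'}(y)-\phi_{s',t}(y)|^{q}\le c_{2}|t'-t|^{q/2}$.

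For the initial-time piece (assume WLOG $s\le s'$), I would invoke the composition property of Proposition \ref{composition} to write $\phi_{s,t}(x)=\phi_{s',t}(\phi_{s,s'}(x))$ almost surely. Conditioning on $\F_{s'}$ and applying the conditional form of Lemma \ref{ganznewestimate} from Remark \ref{remark}, with the random but $\F_{s'}$-measurable initial point $\phi_{s,s'}(x)$ (the constant $c_1$ from Lemma \ref{ganznewestimate} is uniform in the initial condition), then taking expectations and applying the terminal-time bound to $\phi_{s,s'}(x)-x$, one obtains
$$
\E|\phi_{s,t}(x)-\phi_{s',t}(x)|^{q}\;\le\; c_{1}\,\E|\phi_{s,s'}(x)-x|^{q}\;\le\; c_{1}c_{2}\,|s'-s|^{q/2}.
$$
Combining the three estimates by the triangle inequality in $L^q$ finishes the proof. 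The one step that requires genuine care is the initial-time estimate: one must legitimately condition on $\F_{s'}$, which works because the driving field $M$ has independent increments so that the spatial $L^q$-bound holds conditionally with the same constant $c_1$ when the initial point is replaced by any $\F_{s'}$-measurable random vector. Everything else is a direct application of Lemma \ref{ganznewestimate} and BDG with the boundedness hypotheses.
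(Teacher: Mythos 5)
Your argument is, in substance, the paper's own proof: the same three estimates (the spatial bound from Lemma \ref{ganznewestimate} with $f\equiv K$, Burkholder for the terminal-time increment, and the flow property plus the conditional form of Lemma \ref{ganznewestimate} from Remark \ref{remark} for the initial-time increment, reduced again to a Burkholder bound for $\phi_{s,s'}(x)-x$). Each of these three steps is carried out correctly.

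There is, however, one concrete defect in your decomposition. You telescope through the intermediate quantities $\phi_{s',t}(x)$ and $\phi_{s',t}(y)$, which are only defined when $t\ge s'$. The hypotheses of the lemma impose $s\le t$ and $s'\le t'$ but no relation between $t$ and $s'$, and the case $t<s'$ genuinely occurs (e.g.\ $s=t=0$, $s'=t'=\delta$) and is needed for the Kolmogorov argument in which the lemma is later used. Your ``WLOG $t\le t'$ and $s\le s'$'' does not exclude it. The paper avoids the problem by routing through the \emph{larger} terminal time: it compares $\phi_{s,t}(x)$ with $\phi_{s,t}(y)$, then $\phi_{s,t}(y)$ with $\phi_{s,t'}(y)$, then $\phi_{s,t'}(y)$ with $\phi_{s',t'}(y)$; since $t'\ge t\ge s$ and $t'\ge s'$, every intermediate object is defined. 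Your proof is repaired by making the same choice (or by treating the case $t<s'$ separately with the reflected decomposition); as written, the step fails in that case.
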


\begin{proof} Fix $T>0$. We will assume without loss of generality that $0 \le s \le t \le t' \le T$ and 
$0 \le s' \le t'\le T$. Further, $c$ denotes a constant (possibly depending on $a,b,\mu,K,q$ and $T$) whose value may change from line to line.
First note that Lemma \ref{G} implies weak completeness. 
\begin{align}\label{lpabsch}
\begin{split}
\E \big(|\phi_{s,t}(x)-&\phi_{s',t'}(y)|^{q}\big)\le c\, \Big( \E \big(|\phi_{s,t}(x)-\phi_{s,t}(y)|^{q}\big)\\
&+ \E \big(|\phi_{s,t}(y)-\phi_{s,t'}(y)|^{q}\big)
+ \E \big(|\phi_{s,t'}(y)-\phi_{s',t'}(y)|^{q}\big)\Big).
\end{split}
\end{align}
We estimate the three terms separately. Concerning the first term, Lemma \ref{ganznewestimate} (with $f$ equal to the constant $K$) implies
$$
\E \big(|\phi_{s,t}(x)-\phi_{s,t}(y)|^{q}\big) \le c |x-y|^{q}.
$$
Applying Burkholder's inequality, the second term in  \eqref{lpabsch} can 
be estimated as follows:
\begin{equation}\label{second}
\E \big(|\phi_{s,t}(y)-\phi_{s,t'}(y)|^{q}\big)=\E \Big| \int_t^{t'}b(\phi_{s,u}(y))\,\dd y +  \int_t^{t'} 
M\big(\phi_{s,u}(y),\dd u\big)\Big|^{q}\le c|t'-t|^{q/2}.
\end{equation}
Finally, we estimate the third term in \eqref{lpabsch}. Assuming w.l.o.g.\ that $s \le s'$, we get
\begin{eqnarray*}
 \E \big(|\phi_{s,t'}(y)-\phi_{s',t'}(y)|^{q}\big) &=& \E \big(|\phi_{s',t'}\big(\phi_{s,s'}(y)\big)-\phi_{s',t'}(y)|^{q}\big)\\
 &=& \E \big(\E\big(|\phi_{s',t'}\big(\phi_{s,s'}(y)\big)-\phi_{s',t'}(y)|^{q}|\F_{s'}\big)\big)\\
&\le& c\,\E |\phi_{s,s'}(y)-y|^{q}, 
\end{eqnarray*}
where we used Lemma \ref{ganznewestimate} and Remark \ref{remark} in the last step. The last term can be estimated by
$c \, |s'-s|^{q/2}$ just like in \eqref{second}. 
Therefore, the assertion of the lemma follows.
\end{proof}

We continue by proving a version of Theorem \ref{globalflow} under stronger assumptions. 

\begin{proposition}
Let the assumptions of the previous lemma hold for some $\mu>d+2$. Then the sde \eqref{sde} admits a global semiflow.
\end{proposition}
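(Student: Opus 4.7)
The plan is to combine the anisotropic moment bound of Lemma \ref{lp} with a multiparameter Kolmogorov continuity theorem to produce a jointly continuous modification of $\phi$, and then to propagate the a.s.\ cocycle identity from Proposition \ref{composition} by continuity. Boundedness of $a,b$ together with (A$_{\mu,K}$) implies, via Lemma \ref{G} and Proposition \ref{solution}(b), that the sde is weakly complete, so $\phi_{s,t}(x)$ is defined for all $x\in\R^d$ and all $0\le s\le t$; the blow-up time will therefore be identically $\infty$ and we will take $\Theta\equiv\infty$.

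Since $\mu>d+2$, pick an exponent $q\in(d+4,\mu+2)$. Lemma \ref{lp} then gives, for every $T>0$ and some $c=c(T)$,
\[
\E|\phi_{s,t}(x)-\phi_{s',t'}(y)|^{q}\le c\bigl(|x-y|^{q}+|t-t'|^{q/2}+|s-s'|^{q/2}\bigr)
\]
for all $(s,t,x),(s',t',y)$ in $\{0\le s\le t\le T\}\times\R^d$. This is an anisotropic H\"older bound with exponent $1$ in each of the $d$ spatial coordinates and $1/2$ in each of the two temporal coordinates, so the effective dimension entering the multiparameter Kolmogorov criterion is $\sum_i 1/\alpha_i = d\cdot 1 + 2\cdot 2 = d+4$. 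Since $q>d+4$, the anisotropic Kolmogorov continuity theorem (via coordinatewise chaining, or Garsia--Rodemich--Rumsey) produces a modification $\varphi$ of $\phi$ that is locally H\"older, in particular jointly continuous, on $\{0\le s\le t\le T\}\times\R^d$. A diagonal argument over $T=n\in\N$, together with uniqueness of modifications on overlaps, yields a single modification $\varphi$ continuous off a fixed null set $N$; redefining $\varphi_{s,t}(x):=x$ on $N$ makes $\varphi(\omega)$ continuous for every $\omega$.

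It remains to verify the cocycle identity. By Proposition \ref{composition}, for each rational $0\le s\le t\le u$ and each rational $x\in\R^d$, $\varphi_{s,u}(x)=\varphi_{t,u}(\varphi_{s,t}(x))$ holds a.s.; intersecting the countable family of null sets gives a set $\Omega_0$ of full measure on which this identity holds simultaneously for all such rational quadruples, and continuity of $\varphi$ upgrades it to every real $(s,t,u,x)$ with $s\le t\le u$. Redefining $\varphi$ off $\Omega_0$ to be the identity yields a pathwise semiflow; properties (i), (iv), (v) of Definition \ref{thedefinition} are automatic once $\Theta\equiv\infty$. The main technical obstacle is the bookkeeping in the anisotropic Kolmogorov step: the identification of the effective dimension as $d+4$ and the requirement $q>d+4$ are precisely what forces the hypothesis $\mu>d+2$; everything after the extraction of a continuous modification is soft.
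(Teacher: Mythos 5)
Your proposal is correct and follows essentially the same route as the paper: Lemma \ref{lp} plus the anisotropic Kolmogorov continuity theorem (the paper cites \cite[Theorem 1.4.1]{K90}, whose condition $\sum_i\alpha_i^{-1}<1$ is exactly your requirement $d+4<q<\mu+2$, forced by $\mu>d+2$) yields a jointly continuous modification, and the cocycle identity is then propagated from Proposition \ref{composition} by a countable-dense-set-plus-continuity argument and a final redefinition on a null set. The only detail you gloss over is that property (iv), $\varphi_{s,s}=\mathrm{id}$, is not literally automatic from $\Theta\equiv\infty$ but follows by the same density-and-continuity argument (as the paper spells out), so there is no substantive gap.
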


\begin{proof}
Lemma \ref{lp} and Kolmogorov's continuity theorem (as formulated e.g. in \cite{K90}, Theorem 1.4.1) show that there exists a 
modification $\varphi$ of the solutions $\phi$ 
which is jointly continuous in all three variables for all $\omega \in \Omega$. Since $\varphi_{s,s}(x)=\phi_{s,s}(x)=x$ 
almost surely for each fixed $s$ and $x$ and since $(s,x) \mapsto \varphi_{s,s}(x)$ is 
continuous, there exists a set $\NN$ of measure 0 such that  $\varphi_{s,s}(x)=x$ for all $x,s$ and all $\omega \notin \NN$. 
Redefining $\varphi_{s,t}(x):=x$ on $\NN$, we obtain   $\varphi_{s,s}(x)=x$ for all $s,x,\omega$.

It remains to show that for each $0 \le s \le t \le u$, $x \in \R^d$, and $\omega \in \Omega$, we have
\begin{equation}\label{semiflow}
\varphi_{s,u}(x)=\varphi_{t,u} (\varphi_{s,t}(x)).
\end{equation}
Observe that \eqref{semiflow} holds up to a null set depending on $s,t,u,$ and $x$ by Proposition \ref{composition}.   
Since both sides of \eqref{semiflow} are continuous functions of $(s,t,u,x)$, 
we can find a null set $\widetilde \NN$ in $\Omega$ such that 
$\varphi_{s,u}(x)=\varphi_{t,u} (\varphi_{s,t}(x))$ holds for all $(s,t,u,x)$ outside of $\widetilde \NN$. Redefining 
$\varphi_{s,t}(x):=x$ on $\widetilde \NN$, we see that $\varphi$ is a global semiflow associated to \eqref{sde}.
\end{proof}

Next, we relax the assumptions in the previous section and provide sufficient conditions for a local and a global semiflow to exist.\\

\begin{proof}[ of Theorem \ref{localflow}]
Let Assumption $\mathrm{(A_{\mu,\rm{loc}})}$ be satisfied and let $N \in \N$. We can find a function $b^N: \R^d \to \R^d$ and a continuous martingale field 
$M^N$ on the given filtered probability space such that $b^N(x)=b(x)$ and $M^N(t,x)=M(t,x)$  for all $|x|\le N$ and all $t \ge 0$ and 
such that $b^N$ and $a^N$ corresponding to  $M^N$ satisfy assumptions $\mathrm{(A_{\mu,K})}$ for some  $K \ge 0$ (depending on $N$) and are globally bounded. 
For example, we can take any function 
$\psi: \R \to [0,1]$ which is $C^{\infty}$ and non-increasing and which satisfies $\psi(s)=1$ for $s \le 1$ and $\psi(s)=0$ for $s \ge 2$ and 
define 
$$
b^N(x):=\Big( \psi\Big( \frac{|x|}{N} \Big)\Big)^2\,b(x),\qquad  M^N(t,x):=\psi\Big( \frac{|x|}{N} \Big)\,M(t,x),\;x \in \R^d,\,t \ge 0.
$$ 
By Theorem \ref{globalflow}, there exists an 
associated global semiflow $\varphi^N$. Clearly, there exists a set $\Omega_0$ of full measure such that for all $\omega \in \Omega$, and 
all $N>M$, $M,N \in \N$ the semiflows $\varphi^M$ and $\varphi^N$ agree inside a ball of radius $M$ in the following sense:
For all $0 \le s \le u$, $x \in \R^d$, we have $\sup_{s \le t \le u}|\varphi_{s,t}^M(x)|<M$ iff  $\sup_{s \le t \le u}|\varphi_{s,t}^N(x)|<M$ 
and in this case $\varphi_{s,u}^M(x)= \varphi_{s,u}^M(x)$. For $\omega \in \Omega_0$, we define 
$\Theta(s,x,\omega):=\lim_{N \to \infty} \inf\{t \ge s: |\varphi_{s,t}^N(x)| \ge N\}$ 
and for $t \in [s,\Theta(s,x,\omega))$, define $\varphi_{s,t}(x,\omega):=\varphi_{s,t}^N(x,\omega)$, where $N$ is any positive integer 
satisfying $\sup_{s \le u \le t}|\varphi^N_{s,u}(x)|<N$ (such an $N$ exists and the definition is independent of the choice of $N$). 
On the complement of $\Omega_0$, we define 
$\Theta \equiv \infty$ and $\varphi \equiv {\mathrm {id}}_{\R^d}$. It is straightforward to check that $(\varphi,\,\Theta)$ 
is a local semiflow of equation \eqref{sde}.
\end{proof}

\begin{proof}[ of Theorem \ref{globalflow}]
From the previous section, we know that \eqref{sde} admits a local semiflow $\varphi$ and 
Theorem \ref{strongcompleteness} shows that for every $s \ge 0$, there exists a set $\Omega_s$ of full measure such that 
$\Theta(s,x,\omega)=\infty$ for all $x \in \R^d$ and all $\omega \in \Omega_s$. Let $\widetilde \Omega$ be 
the intersection of the sets $\Omega_s$ for all rationals $s\ge 0$ and redefine $\varphi$ as the identity on the complement 
of $\widetilde \Omega$. Then we have $\Theta(s,x,\omega)=\infty$ for all $x \in \R^d$, $s \ge 0$,  and all $\omega \in \Omega$, 
i.e.~$\varphi$ is a global semiflow.
\end{proof}

\section{Strong $\Delta$-completeness}\label{strongdelta}

So far, we have discussed weak and strong completeness of an sde which mean that images of single points, respectively, subsets of full dimension 
survive almost surely under a  locally continuous modification of the solution. One can consider intermediate concepts of completeness. The concept of {\em strong $p$-completeness} for 
integer $p$ was introduced in \cite{Li94} meaning that $p$-dimensional submanifolds survive under the local semiflow. It seems natural to consider 
a corresponding concept of strong $\Delta$-completeness for arbitrary subsets of dimension $\Delta \in [0,d]$. For a precise definition, we need to 
agree on a particular notion of dimension like Hausdorff dimension or upper Minkowski dimension. In the following definition we will choose the upper 
Minkowski dimension (also known as box (counting) dimension) only because we can prove some result for it. We will in fact not even assume that a local 
semiflow exists.

\begin{defi}
Let $\Delta \in [0,d]$. We say that the sde \eqref{sde} is {\em strongly $\Delta$-complete} if for any (deterministic) set $A \subset \R^d$ of upper Minkowski 
dimension $\Delta$ there exist a set $\Omega_0$ of $\Omega$ of full measure and a modification $\varphi$ of the solution $\phi$ starting at time 0 for which 
$(t,x) \mapsto \varphi_t(x)$ is continuous on $[0,\infty) \times A$ for all $\omega \in \Omega_0$.  
\end{defi}

\begin{proposition}
Assume that {\rm (A$_{\mu,K}$)} holds. Then the sde \eqref{sde} is strongly $\Delta$-complete for each $\Delta < \mu +2$ (satisfying $\Delta \in [0,d]$).
\end{proposition}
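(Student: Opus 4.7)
The key idea is to bundle the temporal variable into the target space: view $\phi_{0,\cdot}(\cdot)$ as a random field indexed by $A$ with values in the Banach space $C([0,T],\R^d)$, so that the only direction in which continuity must be established is the spatial one. The ambient Euclidean dimension $d$ then plays no role, and the relevant dimension becomes the upper Minkowski dimension of $A$.

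First, I would observe that {\rm (A$_{\mu,K}$)} is the special case of {\rm (H$_{f,\mu}$)} with $f\equiv K$, and that Lemma \ref{G} then gives {\rm (G)}, so global solutions exist by Proposition \ref{solution}. Applying Lemma \ref{ganznewestimate} with $p=\mu+2$, $f\equiv K$, and any $q\in(0,\mu+2)$, the exponential factor reduces to the deterministic quantity $\exp(PqKT/2)$ provided $P>1$ is chosen so that $qQ/p<1$. Hence for every $T>0$ and every $q\in(0,\mu+2)$ there is a constant $c=c(q,K,T)$ with
\[
\E\sup_{t\in[0,T]}|\phi_{0,t}(x)-\phi_{0,t}(y)|^{q}\le c\,|x-y|^{q}\qquad\text{for all }x,y\in\R^d.
\]
Now fix $q\in(\Delta,\mu+2)$, which is possible since $\Delta<\mu+2$.

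Next, without loss of generality assume $A$ is bounded (the general case follows by writing $A=\bigcup_{R\in\N}(A\cap\overline{B(0,R)})$, treating each piece separately, and taking a countable intersection of the resulting full measure sets, noting that on the common part of the index sets the modifications coincide by uniqueness of the underlying solution at each fixed $x$). For fixed $T>0$ define the $C([0,T],\R^d)$-valued random field $\Psi(x):=\phi_{0,\cdot}(x)$; the estimate above reads
\[
\|\Psi(x)-\Psi(y)\|_{L^q(\Omega;\,C([0,T],\R^d))}\le c^{1/q}\,|x-y|.
\]
Since $A$ has upper Minkowski dimension $\Delta$, for every $\Delta_0\in(\Delta,q)$ the covering number satisfies $N(A,\ve)\le c'\ve^{-\Delta_0}$. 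A standard dyadic chaining argument, in the form given by Ledoux and Talagrand for Banach-valued random fields indexed by sets whose covering numbers grow polynomially with exponent strictly smaller than the moment exponent, then yields a continuous modification $\widetilde\Psi:A\to C([0,T],\R^d)$ of $\Psi$, which is moreover H\"older continuous of any order strictly less than $1-\Delta_0/q$. Defining $\varphi_t(x):=\widetilde\Psi(x)(t)$ produces a jointly continuous modification of $(t,x)\mapsto\phi_{0,t}(x)$ on $[0,T]\times A$.

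Finally, I would patch these modifications together. Carrying out the construction on $[0,N]\times A$ for each $N\in\N$ and intersecting the null sets, one obtains a continuous modification on $[0,\infty)\times A$, since on $[0,N]\times A$ the modifications agree with the original $\phi_{0,t}(x)$ at each fixed $(t,x)$ and hence with each other on the common part by continuity. The main obstacle is invoking the appropriate generalised Kolmogorov/Ledoux--Talagrand continuity theorem: one must verify that their chaining scheme, usually stated for real- or Gaussian-valued processes, applies to $L^q$ increments of a Banach-valued field whose index set has only polynomial covering number (rather than integer topological dimension). Once this is granted, the proof is essentially immediate from Lemma \ref{ganznewestimate}.
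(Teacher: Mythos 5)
Your proposal is correct and follows essentially the same route as the paper: both derive the moment bound $\E\sup_{0\le t\le T}|\phi_t(x)-\phi_t(y)|^{q}\le c\,|x-y|^{q}$ for $q\in(\Delta,\mu+2)$ from Lemma \ref{ganznewestimate} with $f\equiv K$ (global existence via Lemma \ref{G}), and then invoke the Ledoux--Talagrand chaining results (Theorems 11.1 and 11.6 of \cite{LT91}) for the $C([0,T],\R^d)$-valued field indexed by a set of upper Minkowski dimension $\Delta$. Your additional remarks on reducing to bounded $A$ and patching over $T=N$ merely flesh out details the paper leaves implicit.
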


\begin{proof}
For $q \in (\Delta,\mu+2)$ and $T>0$, Lemma \ref{G} implies weak completeness and Lemma \ref{ganznewestimate} implies that 
$$
\E \sup_{0 \le t \le T} |\phi_t(x)-\phi_t(y)| \le |x-y|^q c \exp\{qKT/2\},
$$
for some constant $c$ and all $x,y \in \R^d$. Now, a combination of Theorems 11.1 and 11.6 in \cite{LT91} applied to a set $A$ in $\R^d$ of upper 
Minkowski dimension $\Delta$ implies our claim. 
\end{proof}

\begin{remark}
The image under $\varphi_t$ of the set $A$ in the proof of the previous proposition is even almost surely bounded for each $t >0$ (this follows from 
the same theorems in \cite{LT91}).
\end{remark}

\section{Additive noise}\label{additivenoise}

Consider the sde
\begin{equation}\label{additive}
\dd X_t=b(X_t)\,\dd t + \sigma \dd W_t,
\end{equation}
where $b:\R^d \to \R^d$, $\sigma>0$ and $W$ is a $d$-dimensional Wiener process and $b$ satisfies our standing assumption. If $b$ has linear growth, i.e. there exists 
some $c\ge 0$ such that $|b(x)| \le c(1+|x|)$, then the flow generated by the sde is strongly complete, since for each initial condition $x$, we have 
$$
|X_t(x)| \le |x|+c\int_0^t(|X_s(x)|+1)\,\dd s +\sigma |W_t|
$$
for every $t >0$ and an application of Gronwall's lemma yields
$$
|X_t(x)| \le \big(|x|+\sigma \sup_{0\le s \le t}|W_s|+ct\big) \exp\{ct\}.
$$
This is of course well-known, see \cite{CHJ13} also for a discussion in case $b$ does not have linear growth.

One might be tempted to conjecture that one can replace the linear growth property of $b$ by the slightly weaker property
\begin{equation}\label{onesidedgrowth}
\langle b(x),x \rangle \le c(1+|x|^2)
\end{equation}
for some $c \ge 0$ and all $x \in \R^d$ but this does not seem to be true -- not even in case $c=0$. Instead of providing an example we 
just indicate how an example could look like (without making any claims that these ideas can be turned into rigorous mathematics):  

Let $d=2$, $\sigma=1$ and let $\rho: [0,\infty) \to \R$ be a smooth function such that $\rho(0)=0$ and $\rho$ has heavy and increasingly 
quick oscillations. Consider $b(x_1,x_2):=\rho(|x|){ {-x_2} \choose {x_1}}/|x|$ for $x \neq 0$. Then $\langle b(x),x \rangle = 0$ 
for all $x$. Assume we observe the motion of the unit ball under the flow. If -- in a short time interval -- the ball is pushed a bit 
in the positive direction of the first coordinate (say), then the huge tangential drift will ensure that the expansion in the 
negative first coordinate direction is (at least) almost as large as in the first. 
So the outer boundary of the image of the ball will remain 
to look almost like a sphere with center 0 and the radius will on average  increase at least by order $\delta^{1/2}$ 
during a time interval of length $\delta$. Since $\delta$ can be chosen arbitrarily small, it follows that strong completeness 
cannot hold.

These considerations suggest that in addition to \eqref{onesidedgrowth} one should impose some control on the growth of the tangential part 
of $b$. The following proposition shows that a quadratic (not just linear!) growth of that component guarantees strong completeness in the additive noise case.

\begin{proposition}\label{addinoise}
Let $b: \R^d \to \R^d$ satisfy our standing assumptions and assume that -- in addition -- there exists some $c \ge 0$ such that
for all $x \in \R^d$ the following hold true:
\begin{itemize}
\item[(i)] $\langle b(x),x\rangle  \le c(1+|x|^2)$,
\item[(ii)] $\Big| b(x)-\frac{\langle b(x),x\rangle}{|x|^2}x \Big| \le c(1+|x|^2)$.
\end{itemize}
Then, for each $\sigma \ge 0$, the sde is \eqref{additive} is strongly complete (and even admits a global flow).
\end{proposition}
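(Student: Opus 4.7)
The approach is to pass to the associated random ODE and establish a pathwise a priori bound that uses both (i) and (ii). Assumption (i) yields (G) with $\rho(u)=(2c+\sigma^2 d)(1+u)$, so Proposition \ref{solution}(b) gives weak completeness. Setting $Y_t := X_t - \sigma W_t$, the process $Y$ solves the random ODE
\begin{equation*}
\dot Y_t = b(Y_t + \sigma W_t),\qquad Y_0 = x,
\end{equation*}
pathwise in $\omega$. It therefore suffices to show that, for almost every $\omega$, this ODE has, for every $x\in\R^d$, a unique global solution depending jointly continuously on $(t,x)$.

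The main step is a pathwise bound on $|Y_t(x)|$. Fix $T>0$ and $\omega$ with $W_\ast := \sigma\sup_{t\le T}|W_t(\omega)|<\infty$. Decompose $b = h + g$, where $h(x) := \langle b(x),x\rangle x/|x|^2$ is the radial and $g := b-h$ the tangential component; by (ii), $|g(x)|\le c(1+|x|^2)$. Using $X_t=Y_t+\sigma W_t$, a direct computation gives
\begin{equation*}
\langle Y_t, b(X_t)\rangle
= \frac{\langle Y_t,X_t\rangle}{|X_t|^2}\,\langle b(X_t),X_t\rangle \; - \; \sigma\langle W_t, g(X_t)\rangle.
\end{equation*}
When $|Y_t|\ge 2W_\ast$, one has $\langle Y_t,X_t\rangle\ge|Y_t|^2/2$ and $|X_t|\ge|Y_t|/2$, so the first term is controlled using only the one-sided bound (i) on the radial factor, and the second by (ii) together with $\sigma|W_t|\le W_\ast$; this yields $\langle Y_t, b(X_t)\rangle\le C_1(W_\ast)(1+|Y_t|^2)$. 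When $|Y_t|<2W_\ast$, one has $|X_t|\le 3W_\ast$, so continuity of $b$ on compacts gives an absolute bound on $|\langle Y_t,b(X_t)\rangle|$. In both cases $\frac{d}{dt}|Y_t|^2\le 2C(W_\ast)(1+|Y_t|^2)$, and Gronwall produces a pathwise estimate $|Y_t(x)|\le \Phi(|x|,W_\ast,T)$ uniformly in $x\in\R^d$ and $t\in[0,T]$.

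The remaining steps are standard. For such $\omega$, local solutions of the random ODE exist by Peano's theorem (since $b$ is continuous), the a priori bound excludes blow-up, and the local one-sided Lipschitz condition (the standing assumption, using $\A\equiv 0$ for additive noise) gives uniqueness. Continuous dependence on $x$ follows by applying the same condition to $D_t := X_t(x)-X_t(y) = Y_t(x)-Y_t(y)$: one has $\frac{d}{dt}|D_t|^2\le K_R|D_t|^2$ as long as $|X_t(x)|,|X_t(y)|\le R$, and the pathwise bound provides such an $R$ uniformly in $x,y$ ranging over any compact set, so Gronwall gives uniform local Lipschitz continuity of $x\mapsto Y_\cdot(x)$. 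A jointly continuous modification of $(t,x)\mapsto \phi_{0,t}(x)$ on $[0,\infty)\times\R^d$ results; the extension to arbitrary initial times (via the shifted random ODE driven by $W_{s+\cdot}-W_s$) and the semiflow identity are handled as in the proof of Theorem \ref{globalflow}. The principal obstacle is the pathwise a priori estimate: because (i) furnishes only a one-sided bound on $\langle b(x),x\rangle$, the tangential component of $b$ must be isolated so that its potentially quadratic growth from (ii) appears only multiplied by $|\sigma W_t|\le W_\ast$, rather than by $|Y_t|$ itself.
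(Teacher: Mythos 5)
Your argument is correct and follows essentially the same route as the paper: pass to $Y_t=X_t-\sigma W_t$, decompose $b$ at the point $X_t$ into its radial and tangential parts (the paper writes this as $b(y+u)=\alpha(y+u)+\beta v$ with $v\perp y+u$), use (i) on the radial coefficient after checking $\langle Y_t,X_t\rangle\ge 0$ for large $|Y_t|$, observe that the tangential part pairs only with $-\sigma W_t$ so (ii) enters multiplied by $W_\ast$ rather than $|Y_t|$, and conclude by Gronwall. The only cosmetic difference is that the paper takes the threshold $|y|\ge \sigma\sup_{t\le T}|W_t|+1$ (which also covers the degenerate case $\sigma\sup|W|=0$, where your cutoff $2W_\ast$ would leave $|X_t|$ possibly near zero); otherwise the two proofs coincide.
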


\begin{proof}
For $x \in \R^d$ let $X_t(x),t \ge 0$ be the solution of \eqref{additive} with initial condition $x$ and define  
$Y_t(x):=X_t(x)-\sigma W_t$. Then
\begin{equation}\label{estima}
\frac{\dd}{\dd t} |Y_t(x)|^2=2\langle Y_t(x),b(Y_t(x)+\sigma W_t)\rangle.
\end{equation}
Fix $T >0$ and $\omega \in \Omega$ and let $u \in \R^d$ be such that $|u|\le \sigma \sup_{0 \le t \le T}|W_t|$. 
Once we succeed in  showing  that there exists some $C=C(T,\omega,\sigma,c)$ such that
\begin{equation}\label{to show}
\langle y,b(y+u)\rangle \le C (1+|y|^2)
\end{equation}
for all $y \in \R^d$ and $u$ as above, then the claim in the proposition follows by applying Gronwall's lemma to \eqref{estima}. 
Fix $y$ and $u$ as above such that $|y|\ge \sigma \sup_{0 \le t \le T}|W_t| +1$ (there is no need to consider smaller $|y|$). 
Then $b(y+u)$ can be uniquely decomposed as 
$$
b(y+u)=\alpha (y+u) + \beta v,
$$ 
where $\alpha,\beta \in \R$ and $v$ is a unit vector which is orthogonal to $y+u$. Assumption (i) and the assumed lower bound on $y$ 
guarantee that 
$$
\alpha \le c\frac{1+|y+u|^2}{|y+u|^2}\le 2c
$$ 
and Assumption (ii) implies 
$$
|\beta|=|\langle b(y+u),v\rangle| \le c(1+|y+u|^2).
$$
Therefore, using $\langle y,y+u\rangle\ge |y|^2-|y||u|\ge0$, and $\langle y+u,v\rangle=0$, we get
\begin{align*}
\langle y,b(y+u)\rangle &= \alpha \langle y,y+u\rangle+ \beta \langle y,v \rangle =  \alpha \langle y,y+u\rangle+ \beta \langle y+u,v \rangle -   \beta \langle u,v \rangle\\
&\le 2c\langle y,y+u\rangle +c(1+|y+u|^2)|u||v|\\ 
&\le |y|^2(2c+|u|c)+|y|(2c|u|+2c|u|^2+c(|u|^3+|u|)\le C(1+|y|^2)
\end{align*}
for an appropriate $C=C(T,\omega,\sigma,c)$ showing \eqref{to show} thus completing the proof of the proposition.
\end{proof}

\begin{remark} Note that the proof of the previous proposition does not use any properties of the Brownian motion $W$ other than almost sure 
local boundedness, so it can -- for example -- also be applied to additive L\'evy noise. It is of interest to compare Proposition \ref{addinoise} with 
\cite[Section 3.3]{CHJ13}. They provide an example for $d=2$ in which the drift has no radial component and the tangential component grows like the 
third power of the distance to the origin. That example is strongly complete but  there exists a continuous function $g$ starting at 0 such that the 
solution of \eqref{additive} with $W$ replaced by $g$ blows up for some initial conditions.
\end{remark}

\section{Appendix}

We use the notation $Z^*_T=\sup_{0 \le t \le T} Z_t$ for a real-valued process $Z$.

The following lemma is taken from \cite{RS10}.

\begin{lemma}\label{thefirstlemma}
Let $\sigma>0$ be a stopping time and let $Z$ be an adapted non-negative stochastic process with continuous paths defined
on $[0,\sigma[$ which satisfies the inequality
\begin{equation*}
Z_t \le \int_0^t \rho(Z^*_u)\, \dd u + M_t + C,
\end{equation*}
and $\lim_{t \uparrow \sigma} Z^*_t= \infty$ on $\{\sigma < \infty\}$ almost surely. Here, $C \ge 0$ and $M$ is a continuous
local martingale defined
on $[0,\sigma[$, $M_0=0$ and $\rho:[0,\infty[ \to ]0,\infty[$ is non-decreasing,  and $\int_0^{\infty} 1/\rho (u)\, \dd u = \infty$.
Then $\sigma = \infty$ almost surely.
\end{lemma}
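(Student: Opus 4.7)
Set $\Phi(x):=\int_0^x \rho(y)^{-1}\,\dd y$; by hypothesis $\Phi$ is continuous, strictly increasing, and $\Phi(\infty)=\infty$. The plan is to derive a pathwise Bihari-type bound on $Z^*$ in terms of $M^*$ and then to contradict $\P(\sigma<\infty)>0$ by localizing the local martingale $M$.

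For the pathwise step, take suprema in the hypothesis to obtain
\begin{equation*}
Z^*_t\le W_t:=\int_0^t\rho(Z^*_u)\,\dd u + M^*_t + C
\end{equation*}
for $t<\sigma$, where $M^*_t:=\sup_{0\le s\le t}M_s$. Because $W$ is continuous, non-decreasing with $W_0=C$, and, by monotonicity of $\rho$ together with $Z^*\le W$, satisfies $\dd W_t\le \rho(W_t)\,\dd t+\dd M^*_t$ in the Stieltjes sense, the chain rule applied to $\Phi\circ W$ together with $\rho(W_t)\ge \rho(C)$ gives $\dd\Phi(W_t)\le \dd t + \dd M^*_t/\rho(C)$. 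Integrating yields the pathwise Bihari bound
\begin{equation*}
\Phi(Z^*_t)\le \Phi(W_t)\le \Phi(C)+t+M^*_t/\rho(C),\qquad t<\sigma.
\end{equation*}

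For the localization step, suppose towards a contradiction that $\P(\sigma<\infty)>0$ and fix $T>0$ with $\P(\sigma\le T)>0$. Let $\theta_n:=\inf\{t<\sigma:|M_t|\ge n\}$, set to $\sigma$ if no such $t$ exists, so that $|M_t|\le n$ on $[0,\theta_n)$. On the event $\{\sigma\le T,\theta_n=\sigma\}$ the pathwise bound reads $\Phi(Z^*_t)\le \Phi(C)+T+n/\rho(C)<\infty$ for every $t<\sigma$; sending $t\uparrow\sigma$ then contradicts $Z^*_{\sigma-}=\infty$. Hence $\P(\sigma\le T,\theta_n=\sigma)=0$ for every $n$, and taking the countable union over $n$ rules out $\{\sigma\le T,\sup_{t<\sigma}|M_t|<\infty\}$.

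The residual event $\{\sigma\le T,\sup_{t<\sigma}|M_t|=\infty\}$, which by Dambis--Dubins--Schwarz is contained in $\{[M]_{\sigma-}=\infty\}$, is the technical core. I would replace the pointwise bound $|M^{\theta_n}|\le n$ by the $L^2$-bound obtained from the finer localization $\mu_k:=\inf\{t:[M]_t\ge k\}$: Doob's inequality gives $\E[(M^*_{\mu_k})^2]\le 4k$. Evaluating the pathwise Bihari bound at $\tau_R\wedge\mu_k$ with $\tau_R:=\inf\{t:Z^*_t\ge R\}$, and applying Chebyshev, yields
\begin{equation*}
\P(\tau_R\le T,\,\tau_R<\mu_k)\le \frac{4k}{\rho(C)^2\bigl(\Phi(R)-\Phi(C)-T\bigr)^2}.
\end{equation*}
A coupling $R=R(k)\to\infty$ with $\Phi(R(k))/\sqrt{k}\to\infty$, together with the fact that $Z^*_{\mu_k}\to\infty$ along the residual event (since $\mu_k\uparrow\sigma$ there), should drive both this probability and the complementary part to zero. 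Calibrating the two localization scales $R$ and $k$ against each other is the main obstacle.
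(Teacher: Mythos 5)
First, note that the paper does not prove this lemma at all: it is quoted verbatim from \cite{RS10}, so there is no in-paper argument to compare against. Judged on its own terms, the first half of your proposal is correct and is a sensible way to start: the pathwise Bihari bound $\Phi(Z^*_t)\le \Phi(C)+t+M^*_t/\rho(C)$ is valid (since $W_t\ge W_0=C$ and $\rho$ is non-decreasing), and the localization by $\theta_n$ correctly shows that, almost surely on $\{\sigma\le T\}$, $M$ is unbounded on $[0,\sigma)$ and hence $[M]_{\sigma-}=\infty$. The problem is the final step, which you yourself flag as unresolved, and the route you sketch cannot be completed. Two things go wrong. (i) The complementary event is not small: your own pathwise bound gives $\Phi(Z^*_{\mu_k})\le \Phi(C)+T+M^*_{\mu_k}/\rho(C)$, and Doob gives $M^*_{\mu_k}=\mathrm{O}(\sqrt{k})$ in $L^2$; so with $\Phi(R(k))/\sqrt{k}\to\infty$ one has $\tau_{R(k)}>\mu_k$ with high probability, i.e.\ essentially all of the mass of $\{\sigma\le T\}$ sits in the uncontrolled event $\{\mu_k\le\tau_{R(k)}\}$, and the bound you do prove concerns an event that is already asymptotically negligible for trivial reasons. (ii) The route is circular: the hypotheses give no control whatsoever on $[M]$, and indeed your own first step shows $\{\sigma\le T\}\subset\{\mu_k\le T\}$ for every $k$ (up to null sets), so $\P(\mu_k\le T)\ge\P(\sigma\le T)$, which is exactly the quantity you are trying to show vanishes. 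More generally, every variant of this scheme (stopping when $Z^*$, $M$, or $[M]$ reaches a level and applying Chebyshev/Doob) only yields bounds of the form $\P(\sigma\le T)\le cT$, which cannot be iterated to zero in this abstract, non-Markovian setting.

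The missing idea is to localize by the level of the \emph{compensator} $J_t:=\int_0^t\rho(Z^*_u)\,\dd u$ rather than by $Z^*$ or $[M]$. From $Z\ge 0$ and the dichotomy you already established, $J_{\sigma-}=\infty$ a.s.\ on $\{\sigma\le T\}$. At $\gamma_j:=\inf\{t: J_t\ge j\}$ the local martingale satisfies $M_{t\wedge\gamma_j}\ge -(C+j)$, so $M_{\cdot\wedge\gamma_j}+C+j$ is a non-negative supermartingale and the maximal inequality gives $\P\big(M^*_{\gamma_j}\ge K(C+j)\big)\le (K+1)^{-1}$ \emph{uniformly in} $j$ --- this, not an $L^2$ bound via $[M]$, is the correct quantitative control on $M^*$. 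Changing variables $j=J_t$ gives, on $\{\sigma\le T\}$,
\begin{equation*}
T\ \ge\ \sigma\ =\ \int_0^{\infty}\frac{\dd j}{\rho(Z^*_{\gamma_j})}\ \ge\ \int_0^{\infty}\frac{\dd j}{\rho\big(C+j+M^*_{\gamma_j}\big)},
\end{equation*}
and after taking expectations, applying Tonelli and the uniform bound above with $(K+1)^{-1}\le\tfrac12\P(\sigma\le T)$, the right-hand side is bounded below by a positive multiple of $\int_0^\infty \rho\big((K+1)(C+j)\big)^{-1}\dd j=\infty$ by the Osgood condition, forcing $\P(\sigma\le T)=0$. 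Without some device of this kind that ties the real clock to the compensator, the gap in your write-up is essential, not merely technical.
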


The following stochastic Gronwall lemma is taken from \cite{S13} (for a more recent paper providing optimal constants see \cite{B13}). 
For $p \in (0,1)$ define 
$$\tilde c_p:=\big(4 \wedge \frac 1p\big) \frac{\pi p}{\sin(\pi p)}+1.$$
\begin{lemma}\label{thelem}
Let $Z$ and $H$ be nonnegative, adapted processes with continuous paths and assume that 
$\psi$ is nonnegative and progressively measurable. 
Let $M$ be a continuous local martingale starting at 0. If 
\begin{equation*}
Z_t \le \int_0^t \psi_s Z_s\,\dd s + M_t +H_t
\end{equation*}
holds for all $t \ge 0$, then for $p\in(0,1)$, and $\mu,\nu>1$ such that $\frac 1\mu + \frac 1\nu =1$ and $p\nu<1$, we have 
\begin{equation*}
\E \sup_{0 \le s \le t} Z^p_s \le (\tilde c_{p\nu})^{1/\nu}  \Big(\E\exp\Big\{p\mu\int_0^t \psi_s\,\dd s\Big\}\Big)^{1/\mu}  \Big(\E (H^*_t)^{p\nu}\Big)^{1/{\nu}}.
\end{equation*}
If $\psi$ is deterministic, then
\begin{equation*}
\E \sup_{0 \le s \le t}\ Z^p_s \le \tilde c_p \exp\Big\{p\int_0^t \psi_s\,\dd s\Big\}  \Big(\E (H^*_t)^p\Big),
\end{equation*}
and
\begin{equation*}
\E Z_t \le \exp\Big\{\int_0^t \psi_s\,\dd s\Big\} \E H^*_t.
\end{equation*}
\end{lemma}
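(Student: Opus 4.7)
My plan is to transform the integral inequality into a pointwise upper bound of the form $Z^*_t \le \ee^{A_t}(H^*_t+N^*_t)$ for an auxiliary continuous local martingale $N$ bounded below by $-H^*$, and then combine two ingredients: H\"older's inequality to peel off the exponential, and a Burkholder--Davis--Gundy-type moment bound for continuous local martingales that are dominated below by an increasing process. All three assertions follow from this scheme.

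First I would carry out the pathwise step. Set $A_t := \int_0^t \psi_s\,\dd s$ and $F_t := \int_0^t \psi_s Z_s\,\dd s$. The hypothesis gives $F_t' = \psi_t Z_t \le \psi_t(F_t+M_t+H_t)$, i.e.\ $(\ee^{-A_t}F_t)' \le \psi_t \ee^{-A_t}(M_t+H_t)$. Integrating from $0$ to $t$ and multiplying by $\ee^{A_t}$ yields $F_t \le \int_0^t \psi_s \ee^{A_t-A_s}(M_s+H_s)\,\dd s$. The $M$-piece can be rewritten via integration by parts against the finite-variation process $s\mapsto -\ee^{A_t-A_s}$ as $-M_t + \ee^{A_t}N_t$, where $N_t := \int_0^t \ee^{-A_s}\,\dd M_s$ is a continuous local martingale starting at $0$. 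The $H$-piece is bounded by $H^*_t(\ee^{A_t}-1)$. Adding $M_t+H_t$ gives the clean bound $Z_t \le \ee^{A_t}(H^*_t + N_t)$, and since $Z_t \ge 0$, this forces $N_t \ge -H^*_t$ for every $t$. Taking the supremum over $s\le t$ (using that $H^*_s+N_s \ge 0$ and $A_s \le A_t$) yields $Z^*_t \le \ee^{A_t}(H^*_t + N^*_t)$, with $N^*_t \ge 0$.

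The main probabilistic input, which I would prove as a separate sub-lemma, is the following Burkholder-type bound: for a continuous local martingale $N$ with $N_0=0$ satisfying $N_t \ge -Y_t$ for some non-decreasing non-negative adapted process $Y$, and for every $q\in(0,1)$,
$$\E\,(N^*_t)^q \le (\tilde c_q - 1)\,\E\,Y_t^q.$$
I would prove this by localizing $N$ to a true bounded martingale via a stopping sequence, applying Lenglart's domination inequality (or equivalently a good-$\lambda$ inequality) with $|N|$ dominated by the increasing process $Y + $ running-max of $N^+$, and passing to the limit by Fatou. The explicit constant $\tilde c_q = (4\wedge \frac1q)\frac{\pi q}{\sin(\pi q)}+1$ arises from the optimal good-$\lambda$ constant, and the $+1$ accounts for the split between the bound on $Y$ and the bound on $N^+$.

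It remains to combine. For deterministic $\psi$, $\ee^{pA_t}$ is a scalar, so subadditivity $(a+b)^p \le a^p+b^p$ and the sub-lemma give $\E(Z^*_t)^p \le \ee^{pA_t}(\E(H^*_t)^p + \E(N^*_t)^p) \le \tilde c_p\,\ee^{pA_t}\,\E(H^*_t)^p$, the second bound in the lemma. For random $\psi$, H\"older with exponents $\mu,\nu$ on $\E[\ee^{pA_t}(H^*_t+N^*_t)^p]$ produces the factor $(\E \ee^{p\mu A_t})^{1/\mu}$, and since $p\nu<1$ the subadditivity $(H^*_t+N^*_t)^{p\nu}\le (H^*_t)^{p\nu}+(N^*_t)^{p\nu}$ followed by the sub-lemma with $q=p\nu$ and $Y=H^*$ gives $\E(H^*_t+N^*_t)^{p\nu} \le \tilde c_{p\nu}\,\E(H^*_t)^{p\nu}$; taking the $1/\nu$-th power yields the first bound. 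Finally, the $L^1$ bound follows by localizing $M$ to a true martingale, taking expectations of the original inequality to get $\E Z_t \le \int_0^t \psi_s \E Z_s\,\dd s + \E H^*_t$, and applying classical Gronwall. The main obstacle is the sub-lemma in Step~3: the pathwise manipulation and H\"older step are straightforward, but obtaining the precise constant $\tilde c_q$ requires a careful good-$\lambda$ or Lenglart estimate for local martingales with only a one-sided bound.
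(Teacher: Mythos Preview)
The paper does not actually prove this lemma: it is quoted verbatim from \cite{S13} (see the sentence ``The following stochastic Gronwall lemma is taken from \cite{S13}'' immediately preceding the statement). So there is no in-paper proof to compare against.

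Your argument is essentially the one given in \cite{S13}: the pathwise Gronwall step yielding $Z_t^\ast \le \ee^{A_t}(H_t^\ast+N_t)$ with $N_t=\int_0^t \ee^{-A_s}\,\dd M_s$ a continuous local martingale bounded below by $-H^\ast$, followed by a maximal inequality for such one-sidedly bounded local martingales, and then H\"older to separate the exponential factor. This is correct. Two minor remarks. First, for the sub-lemma you sketch via Lenglart/good-$\lambda$, the cleaner route (and the one in \cite{S13}) is to note that $L:=H^\ast+N\ge 0$ and bound $\E(L_t^\ast)^q$ directly in terms of $\E(H_t^\ast)^q$ rather than splitting off $N^\ast$; this avoids having to justify the precise constant $\tilde c_q-1$ for $N^\ast$ alone. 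Second, for the $L^1$ bound your ``take expectations and apply classical Gronwall'' step tacitly assumes $\E Z_s<\infty$; you need to localize $Z$ as well as $M$ (or, more simply, use the pathwise bound and observe that $N$, being a local martingale bounded below by the integrable process $-H^\ast$, is a supermartingale, whence $\E N_t\le 0$).
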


We will need the following lemma at two different places.

\begin{lemma}\label{weiteres2}
Let $x,y \in \R^d$, $x\neq y$, $0=\gamma_0<\gamma_1<...<\gamma_n=1$  and define $x_i:=x+\gamma_i(y-x)$, $i=0,...,n$. Then
$$
\frac{\|\A (x,y)\|}{|x-y|} \le \sum_{i=1}^n \frac{\|\A (x_{i-1},x_{i})\|}{|x_{i-1}-x_i|},\qquad \frac{\tr\A (x,y)}{|x-y|} \le \sum_{i=1}^n \frac{\tr\A (x_{i-1},x_{i})}{|x_{i-1}-x_i|}.
$$
\end{lemma}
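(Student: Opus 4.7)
The plan is to interpret $\A(x,y)$ as the variance--covariance matrix of the vector martingale increment $N:=M(1,x)-M(1,y)$. By the standing assumption that $\frac{\dd}{\dd t}[M(\cd,x),M(\cd,y)]_t=a(x,y)$ is deterministic, the martingale difference $N$ satisfies $\E[NN^T]=[M(\cd,x)-M(\cd,y)]_1=\A(x,y)$. Writing $N_i:=M(1,x_{i-1})-M(1,x_i)$, the telescoping identity $N=\sum_{i=1}^n N_i$ (together with $\E[N_iN_i^T]=\A(x_{i-1},x_i)$) reduces both inequalities to controlling the ``variance'' of a sum of centered random vectors in terms of the individual variances.

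The next step is to derive the two auxiliary square-root bounds
\[
\sqrt{\|\A(x,y)\|}\le\sum_{i=1}^n\sqrt{\|\A(x_{i-1},x_i)\|},\qquad \sqrt{\tr\A(x,y)}\le\sum_{i=1}^n\sqrt{\tr\A(x_{i-1},x_i)}.
\]
For the trace version, use $\tr\A(x,y)=\E|N|^2$ and apply Minkowski's inequality in $L^2(\P;\R^d)$ to $N=\sum_i N_i$. For the operator-norm version, use
$\|\A(x,y)\|=\sup_{|u|=1}\E\langle u,N\rangle^2$ (valid since each $\langle u,N\rangle$ is centered) and apply Minkowski's inequality in $L^2(\P)$ to the scalar decomposition $\langle u,N\rangle=\sum_i\langle u,N_i\rangle$ for each unit $u$, together with $\E\langle u,N_i\rangle^2\le\|\A(x_{i-1},x_i)\|$; taking the supremum over $u$ at the end yields the claim.

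Finally, I convert these into the stated inequalities by a weighted Cauchy--Schwarz step. Set $\alpha_i:=\gamma_i-\gamma_{i-1}=|x_{i-1}-x_i|/|x-y|$, so that $\sum_i\alpha_i=1$. For any $a_i\ge 0$,
\[
\Big(\sum_{i=1}^n\sqrt{a_i}\Big)^{\!2}=\Big(\sum_{i=1}^n\sqrt{\alpha_i}\cdot\sqrt{a_i/\alpha_i}\Big)^{\!2}\le\Big(\sum_{i=1}^n\alpha_i\Big)\Big(\sum_{i=1}^n\frac{a_i}{\alpha_i}\Big)=\sum_{i=1}^n\frac{a_i}{\alpha_i}.
\]
Applying this to $a_i=\|\A(x_{i-1},x_i)\|$ and $a_i=\tr\A(x_{i-1},x_i)$ in turn, combining with the square-root bounds from the previous paragraph, and then dividing through by $|x-y|$, yields both claimed inequalities. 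The argument is essentially elementary; the only conceptual point is the reinterpretation of $\A$ as a covariance that makes the $L^2$ triangle inequality available, and I do not foresee any serious obstacle beyond assembling these three steps cleanly.
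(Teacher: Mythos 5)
Your proof is correct, and it reaches the conclusion by a genuinely different (though closely related) route from the paper. The paper's proof is a single pathwise application of Jensen's inequality to the quadratic form $v \mapsto \frac{\dd}{\dd t}[\,\cdot\,]_t$: writing $A_i(t)=\langle v, M(t,x_i)-M(t,x_{i-1})\rangle$ and choosing weights $\alpha_i=|x_i-x_{i-1}|/|x-y|$, convexity gives directly
$\langle \A(x,y)v,v\rangle = \frac{\dd}{\dd t}\bigl[\sum_i A_i\bigr]_t \le \sum_i \frac{1}{\alpha_i}\frac{\dd}{\dd t}[A_i]_t = \sum_i \frac{1}{\alpha_i}\langle \A(x_i,x_{i-1})v,v\rangle$,
after which taking the supremum over unit $v$ (resp.\ summing over the coordinate vectors) yields both claims in one stroke. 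You instead factor the argument through three steps: the identification $\A(x,y)=\E[NN^T]$ for $N=M(1,x)-M(1,y)$ (which is legitimate here because the standing assumptions make the quadratic variation deterministic, so the relevant martingales are genuine $L^2$ martingales), the $L^2$ triangle inequality to get the square-root subadditivity bounds, and then a weighted Cauchy--Schwarz step to convert those into the stated weighted form. What your route buys is transparency --- everything reduces to Minkowski and Cauchy--Schwarz in $L^2(\P)$, and you obtain the square-root bounds $\sqrt{\tr\A(x,y)}\le\sum_i\sqrt{\tr\A(x_{i-1},x_i)}$ as a usable intermediate --- at the cost of leaning on the deterministic-covariance hypothesis. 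The paper's argument is shorter and purely pathwise (it would survive even if $a$ were random), since the Jensen step packages your Minkowski-plus-Cauchy--Schwarz combination into one inequality for quadratic variations. Both proofs are complete and correct.
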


\begin{proof}
Let $v \in \R^d$ and $\alpha_1,...,\alpha_n>0$ such that $\sum_{i=1}^n \alpha_i=1$. Define $A_i(t):=\sum_{k=1}^n v_k(M_k(t,x_i)-M_k(t,x_{i-1}))$. 
Using Jensen's inequality, we get
\begin{align*}
\langle \A(x,y)v,v\rangle &=\frac{\dd}{\dd t}\Big[ \sum_{i=1}^n  A_i \Big]_t
\le \frac{\dd}{\dd t} \sum_{i=1}^n \frac 1{\alpha_i}\big[A_i\big]_t = \sum_{i=1}^n \frac{1}{\alpha_i}\langle  \A(x_i,x_{i-1})    v,v\rangle.
\end{align*}
Let $\alpha_i:=|x_i-x_{i-1}|/|x-y|$. Taking the supremum over all $v$ with norm 1, the first claim follows. Choosing $v$ to be the $j$-th unit vector and 
summing over $j=1,\cdots,d$, the second claim follows.
\end{proof}

\begin{lemma}\label{G}
Let $a$ and $b$ satisfy our general assumptions and assume that, in addition, there exists $K \ge 0$ such that
$$
2 \langle b(x)-b(y),x-y \rangle + \tr(\cA(x,y)) \le K |x-y|^2 \mbox{ for all } x,y \in \R^d.
$$
Then 
$$
2 \langle b(x),x \rangle + \tr(a(x,x)) \le K|x|^2 + \bigO (|x|).
$$
In particular, {\rm(A$_{0,K}$)} implies {\rm{(G}}$_{\rho})$ for a positive multiple $\rho$ of $\tilde \rho(x):=x \vee 1$.
\end{lemma}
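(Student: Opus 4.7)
The plan is to apply the hypothesis with $y=0$ and then manage the resulting covariance cross term via Cauchy--Schwarz and AM--GM. Setting $y=0$ in the hypothesis yields
\[
2\langle b(x) - b(0), x\rangle + \tr\cA(x,0) \le K|x|^2.
\]
Using the identity $\tr\cA(x,0) = \tr a(x,x) - 2\tr a(x,0) + \tr a(0,0)$ (a consequence of $a(y,x)=a(x,y)^T$ and therefore $\tr a(x,0)=\tr a(0,x)$) to eliminate $\tr\cA$ in favor of $\tr a(x,x)$, this rearranges to
\[
2\langle b(x),x\rangle + \tr a(x,x) \le K|x|^2 + 2\langle b(0),x\rangle - \tr a(0,0) + 2\tr a(x,0),
\]
whose first two correction terms together contribute $\bigO(|x|)$.

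The remaining cross term $2\tr a(x,0)$ is controlled by Cauchy--Schwarz for the covariance kernel: since $a_{ii}(\cdot,\cdot)$ is the covariance of the $i$-th coordinate of the centered Gaussian field $M(1,\cdot)$, one has $|a_{ii}(x,0)| \le \sqrt{a_{ii}(x,x)\, a_{ii}(0,0)}$, and summing over $i$ via the discrete Cauchy--Schwarz yields $|\tr a(x,0)| \le \sqrt{\tr a(x,x)\cdot\tr a(0,0)}$. A weighted AM--GM $2\sqrt{\alpha\beta}\le \tfrac12 \alpha + 2\beta$ with $\alpha = \tr a(x,x)$ and $\beta = \tr a(0,0)$ then gives $2\tr a(x,0) \le \tfrac12 \tr a(x,x) + 2\tr a(0,0)$; substituting and moving $\tfrac12 \tr a(x,x)$ to the left-hand side yields the intermediate bound
\[
2\langle b(x),x\rangle + \tfrac12 \tr a(x,x) \le K|x|^2 + \bigO(|x|).
\]

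To pass from this to the displayed conclusion, I would combine this estimate with its weaker consequence $2\langle b(x),x\rangle \le K|x|^2 + \bigO(|x|)$ (obtained by dropping the nonnegative $\tfrac12\tr a(x,x)$) and with the rearrangement $\tr a(x,x) \le 2K|x|^2 + \bigO(|x|) - 4\langle b(x),x\rangle$. The main obstacle I expect is matching the constant $K$ exactly on the right: the AM--GM step above costs a factor of $\tfrac12$ on $\tr a(x,x)$, and absent additional structure a finer argument (perhaps using an $|x|$-dependent weight in the AM--GM, or iterating Lemma~\ref{weiteres2} along the segment $[0,x]$ to exploit (A$_{0,K}$) on many intermediate pieces) is likely needed to recover exactly $K$ in place of some larger constant $C\ge K$. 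For the ``in particular'' conclusion, however, the weaker bound $C|x|^2+\bigO(|x|)$ already suffices, since it implies $(\mathrm{G}_\rho)$ for $\rho$ a positive multiple of $\tilde\rho(u)=u\vee 1$, which is the form actually used in the paper.
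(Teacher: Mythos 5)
Your reduction to the case $y=0$ followed by Cauchy--Schwarz and AM--GM does correctly yield the intermediate estimate $2\langle b(x),x\rangle + \tfrac12 \tr(a(x,x)) \le K|x|^2 + \bigO(|x|)$, but the passage from there to the stated conclusion is a genuine gap, and it is more serious than a mismatch in the constant. The quantity you have discarded, $\tfrac12\tr(a(x,x))$, is not separately controlled by the hypothesis: only the combination $2\langle b(x)-b(y),x-y\rangle + \tr(\cA(x,y))$ is bounded, so $\tr(a(x,x))$ may grow much faster than $|x|^2$ when $\langle b(x),x\rangle$ is strongly negative. For instance, in $d=1$ with $b(x)=-x^3$ and $M(t,x)=\sigma(x)W_t$, $\sigma(x)=x^2/\sqrt2$, one checks that the hypothesis holds with $K=0$, yet $\tr(a(x,x))=x^4/2$ is quartic. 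Consequently your proposed recovery step, which amounts to the chain $2\langle b(x),x\rangle+\tr(a(x,x)) \le 2K|x|^2+\bigO(|x|)-2\langle b(x),x\rangle$, requires a lower bound $\langle b(x),x\rangle \ge -C|x|^2$ that is simply not available; and no re-weighting of the AM--GM step repairs this, since any choice of $\varepsilon>0$ leaves an uncontrolled positive multiple of $\tr(a(x,x))$ or of $-\langle b(x),x\rangle$ on the right-hand side. In particular you have not obtained $2\langle b(x),x\rangle+\tr(a(x,x))\le C|x|^2+\bigO(|x|)$ for \emph{any} constant $C$, so even the weaker form you invoke for the ``in particular'' application to (G$_\rho$) is not yet justified.

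The device you mention only in passing --- iterating along the segment $[0,x]$ --- is precisely what the paper does, and it is where the real work lies. Writing $x_i=\gamma_i x$ and applying the hypothesis to each consecutive pair, the drift increments telescope (after rescaling each by $|x|/|x_i-x_{i-1}|$) to give $2\langle b(x)-b(0),x\rangle \le K|x|^2 - |x|\sum_i \tr(\cA(x_i,x_{i-1}))/|x_i-x_{i-1}|$, with the exact constant $K$ and with no global cross term $a(x,0)$ ever appearing. One is then left with showing that $\tr(a(x,x))/|x| - H(0,x)$ stays bounded, where $H(0,x)$ is the supremum of these sums over all partitions. The paper handles this by proving, via Kunita--Watanabe, that $\tr(\cA(y,z)) \ge \alpha\,\tr(a(z,z)) - \beta\,\tr(a(y,y))$ whenever $(1-\alpha)(1+\beta)=1$, deducing (together with the additivity of $H$ under refinement, from Lemma \ref{weiteres2}) that $\gamma\mapsto \tr(a(\gamma x,\gamma x))/(\gamma|x|) - H(0,\gamma x)$ is non-increasing, and then evaluating near the unit sphere where $a$ is bounded. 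None of this machinery appears in your argument, so the proof as written is incomplete.
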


\begin{proof}
Let $x \in \R^d \backslash \{0\}$. For $n \in \N$ and $0= \gamma_0< ... < \gamma_n=1$ and $x_i:=\gamma_i\, x$, we have 
\begin{align*}
2 \langle b(x),x \rangle &+ \tr(a(x,x))\\ 
&= 2  \langle b(0),x \rangle + \tr(a(x,x))   + 
2\sum_{i=1}^n \langle b(x_i)-b(x_{i-1}),x_i-x_{i-1}\rangle \frac{|x|}{|x_i-x_{i-1}|}\\
 &\le K\,|x|^2 +  2  \langle b(0),x \rangle + |x| \Big( \frac 1{|x|} \tr(a(x,x))   -  
\sum_{i=1}^n  \frac{\tr(\cA(x_i,x_{i-1}))}{|x_i-x_{i-1}|} \Big). 
\end{align*}
Therefore,
$$
2 \langle b(x),x \rangle + \tr(a(x,x))  \le K\,|x|^2 +  
2  \langle b(0),x \rangle + |x| \Big( \frac 1{|x|} \tr(a(x,x))   - H(0,x)\Big),
$$
where 
$$
H(y,z):=\sup  \Big\{ \sum_{i=1}^n  \frac{\tr(\cA(\xi_i,\xi_{i-1}))}{|\xi_i-\xi_{i-1}|} \Big\},
$$
where the supremum is extended over all partitions of the line segment from $y$ to  $z$. By Lemma \ref{weiteres2}, the sum in the definition of 
$H(x,y)$ is non-decreasing when the partition is refined and hence $H$ has the following {\em additivity property}: 
$H(y,z)=H(y,\xi)+H(\xi,z)$ whenever $\xi$ lies on the line segment from $y$ to $z$. Further,  for $\alpha,\beta>0$ such 
that $(1-\alpha)(1+\beta)=1$ and $y,z \in \R^d$, we have
\begin{align*}
0 &\le \Big(\sqrt{(1-\alpha)a_{i,i}(z,z)}-\sqrt{(1+\beta)a_{i,i}(y,y)}\Big)^2\\
&=  (1-\alpha)a_{i,i}(z,z) +(1+\beta)a_{i,i}(y,y)-2\sqrt{a_{i,i}(y,y)}\sqrt{a_{i,i}(z,z)}\\
&\le (1-\alpha)a_{i,i}(z,z) +(1+\beta)a_{i,i}(y,y) - a_{i,i}(y,z) - a_{i,i}(z,y),
\end{align*}
where we used the Kunita-Watanabe inequality in the last step.
Therefore,
$$
\tr (\cA(y,z)) \ge \alpha  \tr (a(z,z))-\beta \tr (a(y,y)).
$$
For $\gamma>1,\,y=x$, and $z=\gamma x$, we therefore get 
$$
\frac{\tr (a(\gamma x,\gamma x))}{\gamma |x|} - \frac{\tr(a(x,x))}{|x|} 
\le \frac{\tr(\cA(x,\gamma x))}{(\gamma-1)|x|} \le H(x,\gamma x).
$$ 
Using the additivity property of $H$, we see that the function 
$\gamma \mapsto  \frac{\tr(a(\gamma x,\gamma x))}{\gamma |x|} - H(0,\gamma x)$ is non-increasing.
Since $a$ and $b$ are locally bounded, we obtain
$$
2 \langle b(x),x \rangle + \tr (a(x,x)) \le K|x|^2 + \bigO (|x|),
$$
as required.
\end{proof}

Finally we show that for Assumption (A$_{\mu,K}$) to hold it suffices that it holds locally.

\begin{proposition}\label{locglob}
If 
$$
2\langle b(y)-b(x),y-x \rangle + \tr  \A(x,y) + \mu \| \A(x,y)\| \le C|y-x|^2
$$
holds locally, then it holds also globally.
\end{proposition}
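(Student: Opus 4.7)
The plan is to reduce the global inequality to the local one by a line-segment subdivision argument, using Lemma \ref{weiteres2} to control the nonlinear (trace and norm) terms.

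Fix $x,y\in\R^d$ with $x\neq y$ and let $L$ denote the closed line segment joining them. Since $L$ is compact and the hypothesis provides, at each $z\in L$, a neighborhood on which the required inequality holds with the same constant $C$, a standard Lebesgue-number argument yields a partition $0=\gamma_0<\gamma_1<\cdots<\gamma_n=1$ such that, setting $x_i:=x+\gamma_i(y-x)$, the pair $(x_{i-1},x_i)$ lies inside one of those neighborhoods for every $i=1,\dots,n$. Applying the local inequality to each pair gives
\begin{equation*}
2\langle b(x_i)-b(x_{i-1}),x_i-x_{i-1}\rangle \;\le\; C|x_i-x_{i-1}|^2 - \tr\A(x_{i-1},x_i) - \mu\|\A(x_{i-1},x_i)\|.
\end{equation*}

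The key algebraic observation is that $x_i-x_{i-1}=(\gamma_i-\gamma_{i-1})(y-x)$, so that
\begin{equation*}
2\langle b(y)-b(x),y-x\rangle \;=\; \sum_{i=1}^n \frac{2\langle b(x_i)-b(x_{i-1}),x_i-x_{i-1}\rangle}{\gamma_i-\gamma_{i-1}}.
\end{equation*}
Dividing the pairwise local inequality above by $\gamma_i-\gamma_{i-1}$ and summing, the $C$-terms telescope to $C|y-x|^2$ (since $\sum_i(\gamma_i-\gamma_{i-1})=1$), yielding
\begin{equation*}
2\langle b(y)-b(x),y-x\rangle \;\le\; C|y-x|^2 - |y-x|\sum_{i=1}^n \frac{\tr\A(x_{i-1},x_i)}{|x_i-x_{i-1}|} - \mu|y-x|\sum_{i=1}^n \frac{\|\A(x_{i-1},x_i)\|}{|x_i-x_{i-1}|}.
\end{equation*}

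Now I invoke Lemma \ref{weiteres2}, whose two inequalities give exactly
\begin{equation*}
\tr\A(x,y)\;\le\; |y-x|\sum_{i=1}^n \frac{\tr\A(x_{i-1},x_i)}{|x_i-x_{i-1}|},\qquad \|\A(x,y)\|\;\le\; |y-x|\sum_{i=1}^n \frac{\|\A(x_{i-1},x_i)\|}{|x_i-x_{i-1}|}.
\end{equation*}
Substituting these bounds into the previous display yields
\begin{equation*}
2\langle b(y)-b(x),y-x\rangle + \tr\A(x,y) + \mu\|\A(x,y)\| \;\le\; C|y-x|^2,
\end{equation*}
as desired.

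The main thing to be careful about is the direction of the inequalities from Lemma \ref{weiteres2}: those upper bounds on $\tr\A(x,y)$ and $\|\A(x,y)\|$ must be used with the correct sign, which is why one first isolates $2\langle b(y)-b(x),y-x\rangle$ and absorbs the negative $-\tr\A-\mu\|\A\|$ terms using the lemma. Everything else is a compactness argument and a telescoping sum. No additional regularity of $b$ or $a$ beyond the standing assumptions is needed.
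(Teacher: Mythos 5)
Your proof is correct and follows essentially the same route as the paper's: subdivide the segment finely enough that consecutive points fall in a common neighborhood, telescope the drift term against $y-x$, and absorb the $\tr\A$ and $\|\A\|$ terms via the superadditivity inequalities of Lemma \ref{weiteres2}. The only cosmetic difference is that the paper takes an equidistant partition (so the weights $1/(\gamma_i-\gamma_{i-1})$ become the factor $n$), while you work with a general partition and make the Lebesgue-number argument explicit.
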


\begin{proof}
Let $x,y \in \R^d$. For an equidistant partition $x_0=x,...,x_n=y$ of the straight line connecting $x$ and $y$, Lemma \ref{weiteres2} implies
\begin{align*}
2&\langle b(y)-b(x),y-x \rangle + \tr  \A(x,y)  + \mu \| \A(x,y)\|\\
&\le \sum_{i=0}^{n-1}\Big( 2\langle b(x_{i+1})-b(x_i),y-x \rangle +  n \big(\tr  \A(x_{i+1},x_i)  + \mu \| \A(x_{i+1},x_i)\|\big)\Big)\\
&\le C|y-x|^2,
\end{align*}
provided the partition is fine enough. Therefore the assertion follows.
\end{proof}

\noindent {\bf Acknowledgement.} We thank Sebastian Riedel for valuable discussions related to strong completeness in the additive noise case.

\end{document}